\newcommand{\ie}{i.e.,\xspace}
\newcommand{\Real}{\mathbb{R}}
\newcommand{\Nat}{\mathbb{N}}
\newcommand{\numBlocks}{N}
\newcommand{\numArcs}{M}
\newcommand{\blockSize}{n}
\newcommand{\numCoupled}{l}
\newcommand{\numOutCoupled}[1]{\numCoupled^{+}_{#1}}
\newcommand{\numInCoupled}[1]{\numCoupled^{-}_{#1}}
\newcommand{\Graph}{\mathbb{D}}
\newcommand{\Vertices}{\mathbb{V}}
\newcommand{\Arcs}{\mathbb{A}}
\newcommand{\rootVert}{R}
\newcommand{\identMat}{\mathbb{I}}
\newcommand{\zeroMat}{0}
\newcommand{\incArcs}[1]{\delta^{-}(#1)}
\newcommand{\outArcs}[1]{\delta^{+}(#1)}
\newcommand{\incMat}[1]{C^{-}_{#1}}
\newcommand{\outMat}[1]{C^{+}_{#1}}
\newcommand{\define}{\coloneqq}
\newcommand{\enifed}{\eqqcolon}
\DeclareMathOperator{\height}{height}
\DeclareMathOperator{\depth}{depth}
\newcommand{\heightGraph}{\height(\Graph)}
\DeclareMathOperator{\head}{head}
\DeclareMathOperator{\tail}{tail}
\DeclareMathOperator{\even}{even}
\DeclareMathOperator{\odd}{odd}
\DeclareMathOperator{\blkdiag}{blkdiag}
\DeclareMathOperator{\diag}{diag}
\DeclareMathOperator{\super}{super}
\DeclareMathOperator{\hook}{hook}
\DeclareMathOperator{\exact}{exact}
\DeclareMathOperator{\recursive}{rec}
\newcommand{\Schur}{\mathcal{S}}
\newcommand{\Precond}{\mathcal{P}}
\newcommand{\MLSmoother}{G}
\newcommand{\MLProlong}{P}
\newcommand{\MLOperator}{\Psi}
\newcommand{\MLNumLevels}{K}
\newcommand{\MLDimension}{n}
\DeclareMathOperator{\MLSubset}{S}
\theoremstyle{plain}
\newtheorem{thm}{Theorem}[section]
\newtheorem{lem}[thm]{Lemma}
\newtheorem{cor}[thm]{Corollary}
\theoremstyle{definition}
\newtheorem{ass}{Assumption}
\renewcommand{\SetProgSty}[1]{\renewcommand{\ProgSty}[1]{\textnormal{\csname#1\endcsname{##1}}\unskip}}%
\tikzset{layer/.style={%
    execute at begin scope={\pgfonlayer{#1}},
    execute at end scope={\endpgfonlayer}
}}
\newcommand{\algoabovedisplayskip}{2pt}
\newcommand{\algobelowdisplayskip}{2pt}
\newlength{\savedabovedisplayskip}
\newlength{\savedbelowdisplayskip}
\newenvironment{AlgoEq}[0]{
  \setlength{\savedabovedisplayskip}{\abovedisplayskip}
  \setlength{\savedbelowdisplayskip}{\belowdisplayskip}
  \setlength{\abovedisplayskip}{\algoabovedisplayskip}
  \setlength{\belowdisplayskip}{\algobelowdisplayskip}
  }{
  \setlength{\abovedisplayskip}{\savedabovedisplayskip}
  \setlength{\belowdisplayskip}{\savedbelowdisplayskip}
}
\newlength\figureheight
\newlength\figurewidth
\definecolor{plotBlue}{HTML}{1f77b4}
\definecolor{plotOrange}{HTML}{ff7f0e}
\definecolor{plotGreen}{HTML}{2ca02c}
\definecolor{plotRed}{HTML}{d62728}
\newcommand{\thetitle}{A Framework for the Solution of Tree-Coupled Saddle-Point Systems}
\newcommand{\firstauthor}{Christoph Hansknecht}
\newcommand{\secondauthor}{Bernhard Heinzelreiter}
\newcommand{\thirdauthor}{John W. Pearson}
\newcommand{\fourthauthor}{Andreas Potschka}
\newlength{\stextwidth}
\newcommand\makesamewidth[3][c]{
  \settowidth{\stextwidth}{#2}
  \makebox[\stextwidth][#1]{#3}
}
\newcommand{\innerOp}[1]{#1^{\circ}}
\newcommand{\innerGraph}{\innerOp{\Graph}}
\newcommand{\innerVertices}{\innerOp{\Vertices}}
\newcommand{\innerArcs}{\innerOp{\Arcs}}
\begin{document}

\title{\thetitle}

\author[1]{\firstauthor}
\author[2]{\secondauthor}
\author[2]{\thirdauthor}
\author[1]{\fourthauthor}

\affil[1]{\small Institute of Mathematics, Clausthal University of Technology, Erzstr. 1, 38678 Clausthal-Zellerfeld, Germany}
\affil[2]{\small School of Mathematics, The University of Edinburgh, James Clerk Maxwell Building, The King’s
  Buildings, Peter Guthrie Tait Road, Edinburgh, EH9 3FD, United Kingdom}

\date{\today}

\maketitle

\begin{abstract}
  We consider the solution of saddle-point systems with a
  tree-based block structure, introducing a parallelizable direct
  method for their solution. As our key contribution, we then propose several
  structure-exploiting preconditioners to be used during applications
  of the MINRES and GMRES algorithms and analyze their properties. We
  adapt several concepts originating in the field of
  multigrid methods, obtaining a variety of problem-adapted
  multi-level methods. We analyze the complexity of all algorithms, and derive a number of results on eigenvalues of the preconditioned system and convergence of iterative methods. We
  validate our theoretical findings through a range of numerical experiments.
\end{abstract}

\section{Introduction}

The numerical solution of (generalized) saddle-point systems of the type
\begin{equation}
  \label{eq:2by2_coupled_system}
  \begin{pmatrix}
    \mathcal{B} & \mathcal{C}^{T} \\
    \mathcal{C} & -\mathcal{D}
  \end{pmatrix}
  \begin{pmatrix}
    x \\
    y
  \end{pmatrix}
  =
  \begin{pmatrix}
    h \\
    f
  \end{pmatrix}
\end{equation}
has been studied extensively, due to their wide applicability to a
range of fields (see~\cite{numerical_saddle_point} for a survey).
Saddle-point problems are called \emph{normal} when $\mathcal{D} = \zeroMat$
and \emph{generalized} otherwise,
and  appear, for example, as subproblems in many
optimization methods such as
\emph{sequential quadratic programming}~\cite{sl1qp},~\cite[Sec. 12.4]{practical_methods},
\emph{interior point}~\cite[Ch. 19]{numerical_optimization}, and
\emph{sequential homotopy}~\cite{sequential_homotopy_precond}
methods as well as in the context of partial
differential equations (PDEs), following discretizations using
\emph{mixed finite element methods}~\cite{mixed_fem}, a notable
example being the discretized Stokes equation.
Consequently, suitable
preconditioners for such systems have been thoroughly examined
in the context of PDE discretizations
(see~\cite[Ch. 4]{finite_elements} for a summary).
Block-diagonal~\cite{fast_and_robust,black_box_precond} and
block-triangular~\cite{block_triangular_preconds} preconditioners have
been proposed and examined with respect to the spectra of the respective
preconditioned systems, with the overarching goal of bounding their range
independently of discretization parameters.
In more general problem settings, the class of \emph{constraint preconditioners} arises by
approximating $\mathcal{B}$ with another matrix $\mathcal{G}$ in a preconditioner
$\left( \begin{smallmatrix} \mathcal{G} & \mathcal{C}^{T} \\ \mathcal{C} & - \mathcal{D} \end{smallmatrix}  \right)$.
These preconditioners were first~\cite{constraint_preconds} examined for normal and
subsequently~\cite{regularized_constraint_preconds} generalized saddle-point systems, establishing
spectral properties. In the area of nonlinear programming, efforts have also been
made~\cite{primal_block_angular, parallel_interior} in order to exploit
specific block-structures of saddle-point systems in the course of their
numerical solution. More recently, the analysis of preconditioners has also
been extended to both double~\cite{sequential_homotopy_precond,double_saddle_point_spectral}
and multiple~\cite{multi_saddle_point} saddle-point systems.

A particular source of well-structured linear systems arises in stochastic programming
problems~\cite{stochastic_programming}, where
different scenarios are often largely but not completely independent. For general convex
quadratic programs, a framework for interior-point methods named
\texttt{OOPS} has been proposed~\cite{oops} and
\emph{tree-sparse} quadratic problems have been studied
extensively~\cite{tree_sparse,recursive_direct}. For time-dependent
PDEs, \emph{parallel-in-time} methods such as
\texttt{PITA}~\cite{PITA} and \texttt{PFASST}~\cite{PFASST} are
effective tools which similarly exploit coupling structures in order to achieve peak
utilization of massively parallel processors, augmenting other
well-known methods such as
the \emph{overlapping Schwarz method}~\cite{algebraic_schwarz} for domain decomposition in space,
which itself has been generalized to solve \emph{graph-based}
quadratic programs~\cite{decentralized_schemes}. Indeed, interfaces to these
problem-specific solvers are combined within a package called
\texttt{Plasmo.jl}~\cite{plasmo}, allowing for the generic inclusion of
network information at the modeling stage of nonlinear problems.

In this paper we derive a suite of direct and (in particular) iterative solvers for saddle-point systems with a \emph{tree-coupled} structure. Specifically, we extend previous structure-exploiting approaches for saddle-point
systems by incorporating a graph-based coupling structure, where
interactions between individual and otherwise isolated subsystems are
expressed via generic coupling constraints.
To this end, let $\Graph=(\Vertices, \Arcs)$ be a
directed tree (an arborescence), with $\numBlocks$ vertices
$\Vertices \define \{1, \ldots, \numBlocks\}$ and $\numArcs$ arcs
$\Arcs \define \{a_1, \ldots, a_{\numArcs}\} \subseteq \Vertices
\times \Vertices$ directed away from a root $\rootVert \in
\Vertices$. Each vertex has associated variables
$x_{i} \in \Real^{\blockSize_{i}}$, for $\blockSize_{i} \in \Nat$, which
are coupled along the arcs in $\Arcs$. For each arc $a_{k} = (i, j)$,
two matrices
$\outMat{k} \in \Real^{\numCoupled_k \times \blockSize_{i}}$ and
$\incMat{k} \in \Real^{\numCoupled_k \times \blockSize_j}$ describe
the coupling between variables $x_{i}$ and $x_{j}$. Specifically, if
we let $\outArcs{i}$ and $\incArcs{i}$ denote the outgoing and
incoming arcs of $i \in \Vertices$, respectively, the saddle-point
system we shall investigate is defined by
\begin{equation}
  \label{eq:tree_coupled_system}
  \begin{aligned}
    B_{i} x_{i} + \sum_{\mathclap{a_k \in \outArcs{i}}} (\outMat{k})^{T} y_k - \sum_{\mathclap{a_k \in \incArcs{i}}} (\incMat{k})^{T} y_k = h_{i} & \quad \text{for all } i \in \Vertices, \\
    \outMat{k} x_{i} - \incMat{k} x_{j} - D_k y_k = f_k & \quad \text{for all } a_k = (i, j) \in \Arcs,
  \end{aligned}
\end{equation}
where
$B_{i} = B_{i}^{T} \in \Real^{\blockSize_{i} \times \blockSize_{i}}$
corresponds to conditions on $x_{i}$ with a right-hand side
$h_{i} \in \Real^{\blockSize_{i}}$, and
$y_{k} \in \Real^{\numCoupled_{k}}$ are coupling variables with
conditions given by matrices
$D_{k} = D_{k}^{T} \in \Real^{\numCoupled_{k} \times \numCoupled_{k}}$
and right-hand sides $f_{k} \in \Real^{\numCoupled_{k}}$~(see
Figure~\ref{fig:tree_coupled} for an example). This system
is symmetric and can be seen to be a special case of
~\eqref{eq:2by2_coupled_system} by setting
\begin{equation*}
  \begin{aligned}
    \mathcal{B} &\define \blkdiag(B_1, \ldots, B_{\numBlocks}), \\
    \mathcal{D} &\define \blkdiag(D_1, \ldots, D_{\numArcs})\text{, and} \\
    \mathcal{C} &= (\mathcal{C}_{k, i}) \define
                  \begin{cases}
                    \outMat{k} & \text{ if } a_{k} = (i, j), \\
                    -\incMat{k} & \text{ if } a_{k} = (j, i), \\
                    0 & \text{ otherwise.}
                  \end{cases}
                  \quad
                  \text{ for } i \in \Vertices \text{ and } k \in \{1, \ldots, \numArcs\},
  \end{aligned}
\end{equation*}
where the notation  $\blkdiag(B_{1}, \ldots, B_{\numBlocks})$ denotes a matrix consisting
of $\numBlocks$ blocks of rows and columns with diagonal blocks set to the matrices
$B_{i}$ and off-diagonal blocks set to zero matrices of appropriate dimensions (and similarly for other uses of the `$\blkdiag$' notation).
Systems of the form~\eqref{eq:tree_coupled_system}
arise from a broader class of problems. Specifically, consider a nonlinear programming problem
with a separable objective function $\sum_{i \in \Vertices} \phi_{i}(\zeta_{i})$, and constraints
of the form
\begin{equation*}
  \begin{aligned}
    c_{i}(\zeta_{i}) &= 0 \quad \forall i \in \Vertices && | \: \cdot \nu_{i} \\
    \outMat{k} \zeta_{i} - \incMat{k} \zeta_{j} &= 0 \quad \forall a_{k} = (i, j) \in \Arcs && | \: \cdot y_{k} \\
  \end{aligned}
\end{equation*}
composed of (possibly nonlinear) internal constraints as well as
linear coupling constraints on the graph $\Graph$, with corresponding
Lagrange multipliers $\nu_{i}$ and $y_{k}$, respectively.
If this problem has a quadratic objective
$\phi_{i}(\zeta_{i}) = g_{i}^{T} \zeta_{i} + \tfrac{1}{2} \zeta_{i}^{T} H_{i} \zeta_{i}$ and linear constraints
$c_{i}(\zeta_{i}) \define A_{i} \zeta_{i} - b_{i} \; (\stackrel{!}{=} 0)$, its Karush--Kuhn--Tucker (KKT) system is
of the form~\eqref{eq:tree_coupled_system}, where
\begin{equation*}
  B_{i} =
  \begin{pmatrix}
    H_{i} & A_{i}^{T} \\
    A_{i} & 0
  \end{pmatrix}
  ,~\
  x_{i} =
  \begin{pmatrix}
    \zeta_{i} \\
    \nu_{i}
  \end{pmatrix}
  ,~\
  h_{i} =
  \begin{pmatrix}
    - g_{i} \\
    b_{i}
  \end{pmatrix}
  ,~\
  D_{k} = 0
  ,~\
  f_{k} = 0.
\end{equation*}
For general nonlinear $\phi_{i}$ and $c_{i}$, we can employ
an interior point method~\cite[Ch. 19]{numerical_optimization}, generating a sequence $(\zeta^{(l)}_{i}, \nu^{(l)}_{i}, y^{(l)}_{k})_{l}$
of primal--dual solutions based on a given starting point. At each iteration,
a system of the form~\eqref{eq:tree_coupled_system} is solved, where the internal
systems have a matrix $H_i$ corresponding to
the Hessian of the Lagrangian $\phi_{i} + \nu_{i}^{T} c_{i}$ plus a barrier term, and
$A_i$ to the Jacobian of $c_i$ evaluated at the current primal--dual iterate, with $D_{k}$ diagonal.
An alternative to an interior point method is the sequential homotopy
method~\cite{sequential_homotopy,sequential_homotopy_precond}, which also uses linear systems as an
algorithmic backbone. The linear systems to be solved are again
of the form~\eqref{eq:tree_coupled_system} with blocks given by
\begin{equation*}
  B_i =
  \begin{pmatrix}
    H_{i} + \lambda \identMat & A_{i}^{T} \\
    A_{i} & - \delta \identMat
  \end{pmatrix}
  \quad \text{ and }
  \quad
  D_{k} = \delta \identMat,
\end{equation*}
where $\identMat$ denotes the identity matrix of approximate
dimension and $\lambda, \delta > 0$ are algorithmic parameters.

Lastly, note that we make no specific assumptions
regarding the coupling matrices $\outMat{k}$ and $\incMat{k}$, as our
algorithmic framework does not require any such assumption. In terms
of modeling, a common choice for these matrices stems from the
enforcement of \emph{consensus constraints}, \ie requiring certain
entries of the variables $x_{i}$ and $x_{j}$ to
coincide. For this particular case of coupling,
$\outMat{k}$ and $\incMat{k}$ then consist of rows of
positive or negative unit vectors.

In this paper, we provide a new mathematical framework for deriving and analysing direct and preconditioned iterative methods for 
such tree-coupled systems. We propose a range of solution algorithms and implement them in a 
purely sequential fashion; we highlight that these methods are designed to be amenable to parallelization, however this would require a bespoke implementation, so we apply our methods sequentially in order to focus on the linear algebra aspects in this work. Aside from a parallelizable direct method, we implement a range of structured preconditioners which may be embedded within suitable Krylov subspace methods, including block preconditioners, recursive preconditioners, and multi-level approaches. We prove a range of results relating to the convergence, complexity, and spectral properties of our algorithms. Finally, we apply our methodology to problems from a number of fields, including model predictive control, multiple shooting for optimal control, and domain decomposition. These results validate our theoretical results and demonstrate the versatility of our mathematical approach.

\subsection{Notation and Definitions}

A vertex $i \in \Vertices$ is said to be a \emph{leaf} of the tree
$\Graph$ iff $\outArcs{i} = \emptyset$ and an \emph{inner vertex}
otherwise. The \emph{inner subgraph}, denoted by $\innerGraph = (\innerVertices, \innerArcs)$
is the subgraph induced by the set $\innerVertices$ of inner vertices.
We generally assume that the sets $\outArcs{i}$ and
$\incArcs{i}$ are ordered consistently and let
$\delta(i) \define \outArcs{i} \cup \incArcs{i}$.
For each arc $a_{k} = (i, j) \in \Arcs$ we set
$\head(a_{k}) \define j$ and $\tail(a_{k}) \define i$.
The \emph{parent} of a vertex $i \in \Vertices$, $i \neq \rootVert$ is
the vertex $k \in \Vertices$ such that $(k, i) \in \Arcs$ and $k_{i}$
denotes the index of the arc entering $i$, \ie $k_{i} \in \{1, \ldots, \numArcs\}$ is such
that $\incArcs{i} = \{a_{k_i} \}$ and $a_{k_i} = (k, i)$.

For each vertex $i \in \Vertices$ the \emph{children} of $i$
are the head vertices of the arcs in $\outArcs{i}$.
The \emph{depth} of $i$, which we denote as $\depth(i)$, is defined as
the length of the (unique) $(\rootVert, i)$-path in $\Graph$.  Similarly, the
\emph{height} of $i$, denoted $\height(i)$, is defined to be zero if
$i$ is a leaf, and the maximum height of any child vertex in $\outArcs{i}$
plus one otherwise.  The height of $\Graph$, $\heightGraph$, is
defined as the height of $\rootVert$ or, equivalently, as the maximum depth of
any vertex in $\Vertices$.
The subtree rooted at vertex $i$ is denoted by
$\Graph_{\leq i} = (\Vertices_{\leq i}, \Arcs_{\leq i})$ and given by
the union of the vertices and arcs on all $(i, j)$-paths in $\Graph$.
We also let $\numOutCoupled{i} \define \sum_{a \in \outArcs{i}} \numCoupled_{k}$,
$\numInCoupled{i} \define \sum_{a \in \incArcs{i}} \numCoupled_{k}$, and
$\numCoupled_{i} \define \numOutCoupled{i} + \numInCoupled{i}$ be
the outgoing, incoming, and total number of variables coupled to $i \in \Vertices$ respectively.
An example for these definitions is given
in Figure~\ref{fig:tree_coupled}.

We also use lower case letters to denote vectors, upper case ones
for provided matrices, and curly upper case letters for larger block matrices.
Lastly, we present results regarding complexity in the usual
$\mathcal{O}$-notation~\cite[Ch. 1]{combopt}, where for
functions $f, g : \Nat \to \Nat$ we say that $f \in \mathcal{O}(g)$ if
$\limsup_{n \to \infty} f(n) / g(n) < \infty$ and $f \in \Theta(g)$ if
$f \in \mathcal{O}(g)$ and $g \in \mathcal{O}(f)$.

\subsection{Assumptions}
Besides the symmetry of the matrices $B_i$ and $D_{k}$, we make the following
additional assumption to ensure the non-singularity of \eqref{eq:2by2_coupled_system}:
\begin{ass}
  \label{ass:general_assumptions}
  \begin{enumerate}
  \item
    The matrix $\mathcal{B}$ is invertible.
  \item
    The Schur complement of \eqref{eq:2by2_coupled_system}, given by
    \begin{equation}
      \label{eq:2by2_schur_complement}
      \mathcal{S} \define \mathcal{C} \mathcal{B}^{-1} \mathcal{C}^{T} + \mathcal{D},
    \end{equation}
    is positive definite.
  \end{enumerate}
\end{ass}
\begin{lem}
  \label{lem:system_invertible}
  Under Assumption~\ref{ass:general_assumptions}, system~\eqref{eq:2by2_coupled_system}
  is invertible.
\end{lem}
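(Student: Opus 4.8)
The plan is to use the standard block-factorization (block LDL$^\top$ / Schur complement) argument for $2\times 2$ block matrices. Write
\[
  \mathcal{M} \define
  \begin{pmatrix}
    \mathcal{B} & \mathcal{C}^{T} \\
    \mathcal{C} & -\mathcal{D}
  \end{pmatrix},
\]
and observe that, since $\mathcal{B}$ is invertible by Assumption~\ref{ass:general_assumptions}(1), we may eliminate the $x$-block and obtain the block factorization
\[
  \mathcal{M} =
  \begin{pmatrix}
    \identMat & \zeroMat \\
    \mathcal{C}\mathcal{B}^{-1} & \identMat
  \end{pmatrix}
  \begin{pmatrix}
    \mathcal{B} & \zeroMat \\
    \zeroMat & -(\mathcal{C}\mathcal{B}^{-1}\mathcal{C}^{T} + \mathcal{D})
  \end{pmatrix}
  \begin{pmatrix}
    \identMat & \mathcal{B}^{-1}\mathcal{C}^{T} \\
    \zeroMat & \identMat
  \end{pmatrix},
\]
where $\identMat$ and $\zeroMat$ denote identity and zero matrices of the appropriate dimensions. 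This identity is verified by direct multiplication of the right-hand side.

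The two outer triangular factors are unit triangular and hence invertible (their determinants equal $1$). The middle block-diagonal factor is invertible because $\mathcal{B}$ is invertible by assumption and the negative Schur complement $-\Schur = -(\mathcal{C}\mathcal{B}^{-1}\mathcal{C}^{T} + \mathcal{D})$ is invertible, being the negative of the positive definite matrix $\Schur$ from Assumption~\ref{ass:general_assumptions}(2). A product of invertible matrices is invertible, so $\mathcal{M}$ is invertible, which is the claim.

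There is essentially no obstacle here; the only point requiring any care is bookkeeping the block dimensions so that the triangular and diagonal factors are conformable, and recalling that positive definiteness of $\Schur$ in particular implies $\Schur$ (and thus $-\Schur$) is nonsingular. One could alternatively phrase the argument at the level of the linear system: given a right-hand side, eliminate $x = \mathcal{B}^{-1}(h - \mathcal{C}^{T} y)$ from the first block row, substitute into the second to get $\Schur y = \mathcal{C}\mathcal{B}^{-1} h - f$, solve uniquely for $y$ using positive definiteness of $\Schur$, and back-substitute for $x$; this exhibits a unique solution for every right-hand side and hence invertibility. I would present the factorization version as it is the cleanest and sets up notation (the Schur complement) that will be reused later in the paper.
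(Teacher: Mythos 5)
Your proposal is correct and uses exactly the same block $LDL^{T}$/Schur-complement factorization as the paper's proof, with the same justification that the unit-triangular factors and the block-diagonal factor (via invertibility of $\mathcal{B}$ and positive definiteness of $\Schur$) are nonsingular. No issues.
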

\begin{proof}
  Saddle-point systems of the form~\eqref{eq:2by2_coupled_system} can
  be decomposed~\cite[Eq. (3.1)]{numerical_saddle_point} into the product
  \begin{equation*}
    \begin{pmatrix}
      \mathcal{B} & \mathcal{C}^{T} \\
      \mathcal{C} & -\mathcal{D}
    \end{pmatrix} =
    \begin{pmatrix}
      \identMat & \zeroMat \\
      \mathcal{C} \mathcal{B}^{-1} & \identMat
    \end{pmatrix}
    \begin{pmatrix}
      \mathcal{B} & \zeroMat \\
      \zeroMat & -\mathcal{S}
    \end{pmatrix}
    \begin{pmatrix}
      \identMat & \mathcal{B}^{-1} \mathcal{C}^{T} \\
      \zeroMat & \identMat
    \end{pmatrix}.
  \end{equation*}
  The matrices in this product are all invertible since $\mathcal{B}$ is invertible and $\mathcal{S}$
  is positive definite.
\end{proof}
Regarding Assumption~\ref{ass:general_assumptions} it is apparent from
Lemma~\ref{lem:system_invertible} that non-singularity of $\Schur$ is
sufficient to ensure that system~\eqref{eq:2by2_coupled_system} is
invertible. We will however rely on
positive definiteness of $\Schur$ in particular in
Section~\ref{sec:direct_preconds}.
While we have verified that this stronger assumption
is satisfied for a number of problems, our numerical
experiments indicate that our methods work well even if $\Schur$ is
merely invertible.
\begin{figure}[ht]
  \centering
  \begin{subfigure}{.2\textwidth}
    \centering
    \includegraphics{tree_coupled}
  \end{subfigure}
  \begin{subfigure}{.79\textwidth}
    \centering
    \includegraphics{tree_system}
  \end{subfigure}
  \caption{
    Example of a tree-coupled system, based on a tree with $\numBlocks = 3$ vertices $\Vertices = \{1, 2, 3\}$ and
    $\numArcs = 2$ arcs $\Arcs = \{a_{1} = (3, 1), a_{2} = (3,2)\}$.
    Vertices $1$ and $2$ are leaves each having a height of zero, a depth of one, and $3$ as their parent.
    Vertex $\rootVert = 3$ is an inner vertex with a height of one (equal to the height of $\Graph$), a depth of zero,
    and $1$ and $2$ as its children. The inner subgraph $\innerGraph$ consists of vertex $3$ without containing any arcs.
    The arcs entering $1$ and $2$ are $a_1$ and $a_2$ respectively, \ie
    it holds that $k_{1} = 1$ and $k_{2} = 2$. Both arcs have $3$ as their tail while $\head(a_1) = 1$ and $\head(a_2) = 2$.
    The subtree rooted at $3$ is equal to the graph itself, whereas $\Graph_{\leq 1}$ and $\Graph_{\leq 2}$ consist
    of only the vertices $1$ and $2$ respectively without any arcs.
  }
  \label{fig:tree_coupled}
\end{figure}

\section{Direct Method}
\label{sec:direct_method}

In order to solve the system~\eqref{eq:tree_coupled_system} we make
use of a Schur complement approach rather than a complete sparse decomposition,
which has two advantages:
First, a decomposition may be unnecessary in particularly if only a
few variables are coupled, \ie
$\numCoupled_{k} \ll \min(\blockSize_i, \blockSize_j)$. In this case
the systems involving the matrices $B_{i}$ are largely independent,
the corresponding Schur complements are small in size, and a substantial portion
of the computations may be carried out in parallel in order to improve performance
and scale to larger systems.
Second, our approach is highly flexible in how
systems involving the matrices $B_{i}$ are solved. Thus, any
structure-exploiting solution methods for solving these systems can be
easily incorporated into our computational framework.

\subsection{Structure of Algorithm}
We begin by giving a direct method (see Algorithm~\ref{alg:arrowhead_direct_method})
inspired by the exploitation of a
`symmetric bordered block-diagonal structure' introduced by
\citeauthor{parallel_interior} in \cite{parallel_interior}. Using a
symmetric permutation of the blocks constituting
system~\eqref{eq:2by2_coupled_system}, we obtain a nested sequence of
systems with this exploitable structure for each $i \in \Vertices$.
The system associated with $i$ then depends recursively on all
children of $i$, thereby corresponding to the submatrix of~\eqref{eq:2by2_coupled_system}
associated with the subtree of $\Graph$ rooted at $i$.

If $i$ is a leaf, we let $\mathcal{B}_{\leq i} \define B_i$,
$\mathcal{C}^{-}_i \define \incMat{k_i}$, and
$h_{\leq i} \define h_i$. Otherwise, we let
$\outArcs{i} = (a_{k_{1}} = (i, j_{1}), \ldots, a_{k_{r_{i}}} = (i, j_{r_{i}}))$ be
the outgoing arcs of vertex $i$ and set
\begin{align}
    x^{T}_{\leq i} & \define (x^{T}_{\leq j_{1}}, \ldots, x^{T}_{\leq j_{r_{i}}}, x^{T}_{i}, y^{T}_{j_{1}}, \ldots, y^{T}_{j_{r_{i}}}), \notag \\
    h^{T}_{\leq i} & \define (h^{T}_{\leq j_{1}}, \ldots, h^{T}_{\leq j_{r_{i}}}, h^{T}_i, f^{T}_{j_{1}}, \ldots, f^{T}_{j_{r_{i}}}), \notag \\
    \mathcal{D}_{\leq i} &\define \blkdiag \left( D_{j_{1}}, \ldots, D_{j_{r_{i}}} \right), \notag \\
    \mathcal{C}_{\leq i}^{+} & \define \left((\outMat{k_{1}})^{T}, \ldots, (\outMat{k_{r_{i}}})^{T}\right)^{T},\notag \\
    \mathcal{C}_{i}^{-} & \define
                          \adjustbox{valign=c}{
                          \begin{tikzpicture}[inner sep=0pt, minimum width=.5cm]
                            \matrix (S) [every node/.style={minimum width=1.3cm, minimum height=.6cm, inner sep=0cm},
                            matrix of math nodes,
                            left delimiter=(,
                            right delimiter=),
                            ampersand replacement=\&]
                            {
                              0 \& \cdots \& 0 \& \incMat{k_i} \& 0 \\
                            };
                          \end{tikzpicture}
                          }
                          \text{ for } i \neq \rootVert \text{, and} \notag \\
  \mathcal{C}_{\leq i}^{-} & \define \blkdiag \left( \mathcal{C}_{j_{1}}^{-}, \ldots, \mathcal{C}_{j_{r_{i}}}^{-}  \right), \notag \\
  \intertext{
  where $a_{k_i}$ is the unique arc in $\incArcs{i}$ as defined above
  and the zero matrices in the definition of $\mathcal{C}_{i}^{-}$ are
  of appropriate dimensions to align the columns of $\incMat{k_i}$ with those of $B_i$ in the following
  definition of
  }
    \mathcal{B}_{\leq i} & \define
                           \adjustbox{valign=c}{
                           \begin{tikzpicture}[inner sep=0pt, minimum width=.5cm]
                             \begin{scope}[layer=main]
                               \matrix (R) [every node/.style={minimum width=1.3cm, minimum height=.7cm, inner sep=0cm},
                               matrix of math nodes,
                               left delimiter=(,
                               right delimiter=),
                               ampersand replacement=\&]
                               {
                                 |(block-begin)|\mathcal{B}_{\leq j_{1}} \& \& \& \& |(CT-begin)| \\
                                 \& \ddots \& \& \& - (\mathcal{C}_{\leq i}^{-})^{T} \\
                                 \& \& \mathcal{B}_{\leq j_{r_{i}}} \& \& |(CT-end)| \\
                                 \& \& \& |(block-end)| B_i \& (\mathcal{C}_{\leq i}^{+})^{T} \\
                                 |(C-begin)| \& |(lower)| -\mathcal{C}_{\leq i}^{-} \& |(C-end)| \& \mathcal{C}_{\leq i}^{+} \& - \mathcal{D}_{\leq i} \\
                               };
                             \end{scope}
                             \begin{scope}[layer=background layer]
                               \fill[black!5,rounded corners=0.1cm] (C-begin.north west) rectangle (C-end.south east) {};
                               \fill[black!5,rounded corners=0.1cm] (CT-begin.north west) rectangle (CT-end.south east) {};
                             \end{scope}
                             \end{tikzpicture}
                           }. \notag
\end{align}
To obtain the solution associated with the subtree rooted at $i$, we solve the
system $\mathcal{B}_{\leq i} x_{\leq i} = h_{\leq i}$. We call
the structure of $\mathcal{B}_{\leq i}$ (lower-right pointing) \emph{arrowhead} structure
rather than bordered-block diagonal as in~\cite{parallel_interior}.
Since the diagonal blocks $\mathcal{B}_{\leq j_l}$ have arrowhead
structure themselves, $\mathcal{B}_{\leq i}$ has
a \emph{nested (lower-right pointing) arrowhead} structure in general.
Since $\mathcal{B}_{\leq \rootVert}$ is a symmetric permutation
of~\eqref{eq:2by2_coupled_system}, it is invertible under
Assumption~\ref{ass:general_assumptions}.  Our approach
necessitates stronger assumptions, however. Specifically, we want to be able to
solve the nested arrowhead matrices $\mathcal{B}_{\leq i}$ using a
recursive approach based on Schur complements. To this end, we need
the following additional assumption:
\begin{ass}
  \label{ass:direct_method_assumptions}
  \begin{enumerate}
  \item
    The matrices $\mathcal{B}_{\leq i}$ are invertible.
  \item
    The Schur complements
    \begin{equation*}
      \Schur_{\leq i} \define
      \begin{pmatrix}
        -\mathcal{C}_{\leq i}^{-} & \mathcal{C}_{\leq i}^{+}
      \end{pmatrix}
      \begin{pmatrix}
        \mathcal{B}^{-1}_{\leq j_{1}} & & & \\
                                    & \ddots & & \\
                                    & & \mathcal{B}^{-1}_{\leq j_{r_{i}}} & \\
                                    & & & B^{-1}_{i}
      \end{pmatrix}
      \begin{pmatrix*}[r]
        -\left( \mathcal{C}_{\leq i}^{-} \right)^{T} \\
        \left( \mathcal{C}_{\leq i}^{+} \right)^{T}
      \end{pmatrix*}
      + \mathcal{D}_{\leq i}
    \end{equation*}
    are positive definite for all inner vertices $i \in \Vertices$.
  \end{enumerate}
\end{ass}
\begin{algorithm2e}[ht!]
  \LinesNumbered
\SetCommentSty{itshape}
\DontPrintSemicolon

\SetKwComment{Comment}{$\triangleright$\ }{}

\begin{AlgoEq}

\SetKwFunction{solveDirectSchur}{solveDirectSchur}

\Fn{solveDirectSchur($i$, $\left( \mathcal{B}_{\leq j} \right)_{j \in \Vertices_{\leq i}}$, $\left( \mathcal{S}_{\leq j} \right)_{j \in \Vertices_{\leq i}}$, $h_{\leq i}$)} {

  \Input{
    \parbox[t]{\hsize}
    {
      Vertex $i \in \Vertices$ \\
      Subtree systems $\left( \mathcal{B}_{\leq j} \right)_{j \in \Vertices_{\leq i}}$ \\
      Subtree Schur complements of $\mathcal{S}_{\leq j}$ for $j \in \Vertices_{\leq i}$ \\
      Right-hand side $h_{\leq i}$
    }
  }
  \Output{
    \parbox[t]{\hsize}
    {
      Solution $x_{\leq i} = \mathcal{B}_{\leq i}^{-1} h_{\leq i}$
    }
  }
  \If(\Comment*[f]{Base case: $x_{\leq i} = x_{i}$}){Vertex $i$ is a leaf in $\Graph$}{
    \Return $x_{i} = B_{i}^{-1} h_{i}$
    \label{alg:arrowhead_direct_method:leaf_solve}
  }
  \ForEach(\Comment*[f]{Compute right-hand sides}){$l \in \{1, \ldots, r_{i}\}$} {
    $\hat{y}_{l} \gets \solveDirectSchur{$j_l$, $\left( \mathcal{B}_{\leq k} \right)_{k \in \Vertices_{\leq j_l}}$, $\left( \mathcal{S}_{\leq k} \right)_{k \in \Vertices_{\leq j_l}}$, $h_{\leq j_l}$}$ \;
    \label{alg:arrowhead_direct_method:first_recursion}
    $\hat{x}_{l} \gets -\mathcal{C}_{l}^{-} \hat{y}_{l} + \outMat{k_l} B_{i}^{-1} h_{i} - f_{j_l}$
    \label{alg:arrowhead_direct_method:inner_first_solve}
  }
  Solve Schur complement system: $(y_{j_1},\ldots, y_{j_{r_{i}}}) \gets \mathcal{S}_{\leq i}^{-1} \left( \hat{x}_{1}, \ldots, \hat{x}_{r_{i}} \right)$ \;
  Compute solution: $x_{i} \gets B_{i}^{-1} \left( h_{i} - \sum_{l = 1}^{r_{i}} \outMat{k_l} y_{j_l} \right)$ \;
  \label{alg:arrowhead_direct_method:inner_second_solve}
  \ForEach(\Comment*[f]{Compute solutions $x_{\leq j_l}$}){$l \in \{1, \ldots, r_{i}\}$} {
    $\makesamewidth[r]{$x_{\leq j_l}$}{$\hat{z}_{l}$} \gets h_{\leq j_l} + \incMat{j_l} y_{j_l} $ \;
    $x_{\leq j_l} \gets \solveDirectSchur{$j_l$, $\left( \mathcal{B}_{\leq k} \right)_{k \in \Vertices_{\leq j_l}}$, $\left( \mathcal{S}_{\leq k} \right)_{k \in \Vertices_{\leq j_l}}$, $\hat{z}_{l}$}$
  \label{alg:arrowhead_direct_method:second_recursion}
  }
  \Return $x^{T}_{\leq i} = \left( x^{T}_{\leq j_1}, \ldots, x^{T}_{\leq j_{r_{i}}}, x^{T}_{i}, y^{T}_{j_1}, \ldots, y^{T}_{j_{r_{i}}} \right)$   \;
}

\end{AlgoEq}

  \caption{Direct method to solve system involving $\mathcal{B}_{\leq i}$}
  \label{alg:arrowhead_direct_method}
\end{algorithm2e}

\begin{algorithm2e}[ht!]
  \LinesNumbered
\SetCommentSty{itshape}
\DontPrintSemicolon

\SetKwFunction{computeArrowheadSchur}{computeArrowheadSchur}
\SetKwFunction{outgoingSchurBlock}{outgoingSchurBlock}

\begin{AlgoEq}
  \Fn{computeArrowheadSchur($i$)}{

    \Input{
      Vertex $i \in \Vertices$
    }

    \Output{
      Schur complements $(\mathcal{S}_{\leq j})_{j \in \Vertices_{\leq i}}$
    }

    \If{Vertex $i$ is a leaf in $\Graph$}{
      \Return $\emptyset$
    }

    Compute subtree Schur complements:
    \begin{equation*}
      \begin{aligned}
        (\mathcal{S}_{\leq k})_{k \in \Vertices_{\leq j}} \gets {}& \computeArrowheadSchur(j) \\
                                                               & \quad \forall \: j \in V_i = \{j \in \Vertices_{\leq i} \mid j \neq i \}
      \end{aligned}
    \end{equation*}

    $\left( \mathcal{S}_{\leq i} \right)_{l, l} \gets 0 \quad \forall \: l, l' \in \{1, \ldots, r_{i}\} $ \;

    \ForAll{$l \in \{1, \ldots, r_{i}\}$}{
      $Z_l \gets B_{i}^{-1} \left( \outMat{j_{l}} \right)^{T} $ \;
      \label{alg:arrowhead_schur:outgoing_solve}
    }

    \ForAll{$l \in \{1, \ldots, r_{i}\}$}{

      \ForAll{$l' \in \{1, \ldots, r_{i}\}$}{
        $\left( \mathcal{S}_{\leq i} \right)_{l, l'} \gets \left( \mathcal{S}_{\leq i} \right)_{l, l'} + \outMat{j_{l}} Z_{l'}$
      }

      $S_l \gets \outgoingSchurBlock(j_{l}, \mathcal{B}_{\leq j_l}, \mathcal{S}_{\leq j_l})$ \;
      $\left( \mathcal{S}_{\leq i} \right)_{l, l} \gets \left( \mathcal{S}_{\leq i} \right)_{l, l} +  S_l + D_{j_l}$ \;

    }

    \Return $(\mathcal{S}_{\leq j})_{j \in \Vertices_{\leq i}}$ \;
  }
\end{AlgoEq}

  \caption{Algorithm to compute Schur complement $\Schur_{\leq i}$}
  \label{alg:arrowhead_schur}
\end{algorithm2e}
\begin{algorithm2e}[ht!]
  \LinesNumbered
\SetCommentSty{itshape}
\DontPrintSemicolon

\begin{AlgoEq}

  \Fn{outgoingSchurBlock($i$, $\mathcal{B}_{\leq i}$, $\mathcal{S}_{\leq i}$)}{
    \Input{
      \parbox[t]{\hsize}{
        Vertex $i \in \Vertices, i \neq r_{i}$ \\
        System $\mathcal{B}_{\leq i}$ \\
        Schur complement $\mathcal{S}_{\leq i}$
      }
    }

    \Output{
      \parbox[t]{\hsize}{
        Schur block $\mathcal{C}_{i}^{-} \mathcal{B}_{i}^{-1} (\mathcal{C}_{i}^{-})^{T}$
        }
    }

    $(h_1, \ldots, h_{\numCoupled_{k_i}}) \gets \text{Columns of matrix } (\incMat{k_i})^{T}$ \;

    \ForAll{$l \in \{1, \ldots, \numCoupled_{k_i}\}$}{

      Solve system to obtain $z_{l} \gets B_i^{-1} h_l$ \;

      Compute right-hand sides for each $l' \in \{1, \ldots, r_{i}\}$:
      \label{alg:arrowhead_schur:first_incoming_solve}
      \begin{equation*}
        \hat{x}_{l'}^{l} \gets \outMat{k_{l'}} z_{l}
      \end{equation*}

      Solve Schur complement $(y_{j_1}^{l}, \ldots, y_{j_{r_{i}}}^{l}) \gets \mathcal{S}_{\leq i}^{-1}(\hat{x}^{l}_1, \ldots, \hat{x}^{l}_{r_{i}})$ \;

      Compute solution $x_{i}^{l} \gets B_{i}^{-1} (h_{l} - \sum_{l = 1}^{r_{i}} (\outMat{k_{l}})^{T} y_{j_{l}})$
      \label{alg:arrowhead_schur:second_incoming_solve}
      \;
    }

    $Z_{i} \gets \text{Matrix consisting of columns } x_{i}^{1}, \ldots, x_{i}^{\numCoupled_{k_i}}$ \;

    \Return $\incMat{k_i} Z_{i}$
  }

\end{AlgoEq}

  \caption{Algorithm to compute outgoing Schur complement $\mathcal{C}_{i}^{-} \mathcal{B}_{i}^{-1} (\mathcal{C}_{i}^{-})^{T}$}
  \label{alg:arrowhead_schur_outgoing}
\end{algorithm2e}
Based on Assumption~\ref{ass:direct_method_assumptions}, we propose
the direct method laid out in
Algorithm~\ref{alg:arrowhead_direct_method}, solving
a system with matrix $\mathcal{B}_{\leq i}$ using the Schur complement $\Schur_{\leq i}$ and
recursing into the subtree rooted at $i$.
The algorithm consists of three steps:
First, the right-hand sides $\hat{x}_{l}$ for the Schur complement are
computed recursively. Then the system $\Schur_{\leq i} y = \hat{x}$ is solved,
yielding the portion of the solution associated with
the coupling variables $y_{j_l}$. Finally, the remaining parts of
the solution, $x_{i}$ and $x_{\leq j_l}$, are recursively computed based on the
variables $y_{j_l}$.  The first and third steps involve a recursion
into the subtrees rooted at the head vertices of the arcs in
$\outArcs{i}$.  To solve~\eqref{eq:2by2_coupled_system} in its
entirety, we employ the algorithm at the root $\rootVert$ of $\Graph$.

To apply Algorithm~\ref{alg:arrowhead_direct_method},
we need a method to compute the Schur complements $\Schur_{\leq i}$.
Simple calculations show that $\Schur_{\leq i}$ is given
by blocks
\begin{equation*}
  \left( \Schur_{\leq i} \right)_{l, l'} =
  \begin{cases}
    \outMat{k_l} B_{i}^{-1} \left( \outMat{k_l} \right)^{T}
    + \mathcal{C}^{-}_{j_l} \mathcal{B}^{-1}_{\leq j_l} \left( \mathcal{C}^{-}_{j_l} \right)^{T}
    + D_{j_l} & \text{ if } l = l' \text{, and} \\
    \outMat{k_l} B_{i}^{-1} ( \outMat{k_{l'}} )^{T} & \text{ otherwise},
  \end{cases}
\end{equation*}
for $l, l' \in \{1, \ldots, r_{i}\}$. The occurrence of the inverses of the
matrices $\mathcal{B}^{-1}_{\leq j_l}$ suggests that the Schur complements can be
computed from the leaves of the tree up towards its root.

At a leaf $i \in \Vertices$,
no Schur complement is required and~Algorithm~\ref{alg:arrowhead_direct_method}
only solves a system with matrix $B_i$.
To explain the computation higher up in the tree, we put ourselves in
the position where at vertex $i \in \Vertices$, $\Schur_{\leq i}$
has already been computed. At this stage, all that remains
to be done at vertex $i$ is the computation of the product
$\mathcal{C}^{-}_{i} \mathcal{B}^{-1}_{\leq i} ( \mathcal{C}^{-}_{i}
)^{T}$, required for some diagonal block of the Schur complement of the
parent vertex of $i$ (assuming of course that $i \neq \rootVert$, in which
case all Schur complements are by now computed).
To compute this product, we could simply use Algorithm~\ref{alg:arrowhead_direct_method}
repeatedly to solve systems with the matrix $\mathcal{B}_{\leq i}$
and right-hand sides defined using the rows of $\mathcal{C}^{-}_{i}$. However, the structure of the product
makes the computations significantly easier:

First, the only non-zero block $\mathcal{C}^{-}_{i}$ is at the
position of $B_i$ with respect to $\mathcal{B}_{\leq i}$.  Thus, the
right-hand sides $h_{\leq i}$ for the computation have the property
that $h_{\leq j_l} = 0$ and $f_{j_l} = 0$ for all
$l \in \{1, \ldots, r_{i}\}$.  Second, since we only require the product
of the solution with $\mathcal{C}^{-}_{i}$, we only have to compute
the solution value of $x_i$ and can skip the computation of
$x_{\leq j_l}$ and $y_{j_l}$ for all $l \in \{1, \ldots, r_{i}\}$.  As a
consequence, no recursion is required at all in order to determine the
required product, greatly accelerating the computations. The
specific computations to be carried out are summarized
in Algorithms~\ref{alg:arrowhead_schur} and \ref{alg:arrowhead_schur_outgoing}.

\subsection{Complexity}

In the following, we examine the running time of both
Algorithms~\ref{alg:arrowhead_direct_method}
and~\ref{alg:arrowhead_schur} used to solve
\eqref{eq:tree_coupled_system} and determine the required Schur
complements $\Schur_{\leq i}$, respectively.

In general, the complexity of the algorithms depends on how the
systems with matrices $B_i$ and $\Schur_{\leq i}$ are solved, which we have not
specified so far. Since we make no assumptions regarding the matrices
$B_i$ other than non-singularity, any appropriate method may be used in
practice. On the other hand, since the Schur complements
$\Schur_{\leq i}$ are assumed to be positive definite, Cholesky
decompositions or the Conjugate Gradient method \cite{cg} are likely to be used.

To perform an analysis of the complexity independently of these
details we make two assumptions: First, since $\numCoupled_k$ is small
compared to the sizes $\blockSize_i$ of the matrices $B_{i}$, we assume
that the computational costs are dominated by the solutions
of systems $B_i x_{i} = y_{i}$ with different right-hand sides $y_i$, ignoring
other parts such as matrix--vector products with $\incMat{k}$ and $\outMat{k}$
and the solution of systems involving $\Schur_{\leq i}$. Second,
since the methods used to solve the systems $B_{i}$ are arbitrary,
we simply assume that the solution of a single
system involving $B_{i}$ can be carried out in constant time and count
the total number of  such solves, finding
the following:
\begin{lem}
  \label{lem:direct_running_time}
  For each vertex $i \in \Vertices$ the following holds:
  \begin{enumerate}
  \item
    The solution of system~\eqref{eq:tree_coupled_system} using
    Algorithm~\ref{alg:arrowhead_direct_method} with precomputed
    Schur complements requires
    $\Theta( 2^{\depth(i)})$ solutions of systems involving $B_i$.
  \item
    The computation of the Schur complements using
    Algorithm~\ref{alg:arrowhead_schur} requires
    $\mathcal{O}(\numCoupled_{i})$ solutions of systems involving $B_i$.
  \end{enumerate}
\end{lem}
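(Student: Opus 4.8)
The plan is to analyze the two algorithms by unrolling their recursion along the tree and counting $B_i$-solves, treating each such solve as unit cost per the stated convention.

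For part (1), I would trace through Algorithm~\ref{alg:arrowhead_direct_method}. Fix a vertex $i$ and ask: how many times is a system with matrix $B_i$ solved during one top-level call \textsc{solveDirectSchur}$(\rootVert, \ldots)$? The key observation is that the recursion has a \emph{doubling} structure: at an inner vertex $v$, the children subtrees are each visited \emph{twice} — once in the first \texttt{ForEach} loop (line~\ref{alg:arrowhead_direct_method:first_recursion}) to compute the right-hand sides $\hat x_l$, and once in the third \texttt{ForEach} loop (line~\ref{alg:arrowhead_direct_method:second_recursion}) to compute the solutions $x_{\leq j_l}$. Hence, letting $T_i$ denote the number of calls to \textsc{solveDirectSchur}$(i,\ldots)$ issued during the top-level call, we get $T_{\rootVert}=1$ and $T_j = 2\,T_{\parent(j)}$ for every non-root $j$, so $T_i = 2^{\depth(i)}$. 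Within each such call, the vertex-$i$ local work involves a bounded number of solves with $B_i$ (lines~\ref{alg:arrowhead_direct_method:leaf_solve}, \ref{alg:arrowhead_direct_method:inner_first_solve}, \ref{alg:arrowhead_direct_method:inner_second_solve}): in the leaf base case exactly one solve $B_i^{-1}h_i$; in the inner case the solve $B_i^{-1}h_i$ in line~\ref{alg:arrowhead_direct_method:inner_first_solve} (which can in fact be reused across the $r_i$ iterations) and the solve in line~\ref{alg:arrowhead_direct_method:inner_second_solve} — a constant number independent of $i$. Multiplying the constant per-call cost by $T_i = 2^{\depth(i)}$ gives both an $\mathcal{O}(2^{\depth(i)})$ upper bound and an $\Omega(2^{\depth(i)})$ lower bound (since at least one $B_i$-solve happens per call), yielding $\Theta(2^{\depth(i)})$.

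For part (2), I would inspect Algorithm~\ref{alg:arrowhead_schur} together with its subroutine \textsc{outgoingSchurBlock} (Algorithm~\ref{alg:arrowhead_schur_outgoing}). Here the recursion \emph{does not} double: \textsc{computeArrowheadSchur}$(i)$ recurses once into each child subtree via \textsc{computeArrowheadSchur}$(j)$, so each vertex $i$'s body is executed exactly once over the whole computation. I then count the $B_i$-solves attributable to vertex $i$'s body. Solving $Z_l \gets B_i^{-1}(\outMat{j_l})^T$ for $l=1,\ldots,r_i$ contributes $\sum_{l=1}^{r_i}\numCoupled_{k_l}$ solves — but a column of $(\outMat{k_l})^T$ per coupling variable on the outgoing arc $a_{k_l}$, so this is $\mathcal{O}(\numOutCoupled{i})$. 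The call \textsc{outgoingSchurBlock}$(j_l,\ldots)$ performs, crucially, \emph{no further recursion} (as the text emphasizes: the right-hand sides have $h_{\leq j_m}=0$, $f_{j_m}=0$, and only $x_{j_l}$ is needed), so its $B_{j_l}$-solves are counted against vertex $j_l$, not $i$; its cost is one pass over the $\numCoupled_{k_{j_l}}$ columns of $(\incMat{k_{j_l}})^T$, i.e.\ $\mathcal{O}(\numInCoupled{j_l})$ solves with $B_{j_l}$, which when charged to $j_l$ is $\mathcal{O}(\numInCoupled{j_l}) \subseteq \mathcal{O}(\numCoupled_{j_l})$. Summing the outgoing-arc contribution $\mathcal{O}(\numOutCoupled{i})$ and the incoming-arc contribution $\mathcal{O}(\numInCoupled{i})$ charged to $i$ gives $\mathcal{O}(\numOutCoupled{i}+\numInCoupled{i}) = \mathcal{O}(\numCoupled_{i})$ solves of systems involving $B_i$, as claimed.

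The main obstacle, and the point requiring the most care, is the bookkeeping of \emph{which vertex each $B_i$-solve is charged to} in part (2) — in particular verifying that \textsc{outgoingSchurBlock}$(j_l,\ldots)$ genuinely performs no recursive descent (so that its cost is local to $j_l$ and scales with $\numInCoupled{j_l}$ rather than with the subtree size), and confirming that the $r_i$ inner-loop iterations in line~\ref{alg:arrowhead_schur:outgoing_solve} can each reuse a single factorization of $B_i$ so the total is $\mathcal{O}(\numOutCoupled{i})$ distinct right-hand-side solves rather than anything larger. Once the charging scheme is pinned down, both bounds follow by elementary summation; I would present the argument as a short induction on the height of $i$ for part (1) and as a direct per-vertex accounting for part (2).
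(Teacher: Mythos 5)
Your proposal is correct and follows essentially the same route as the paper: for part (1) you count the calls at vertex $i$ via the doubling recurrence $T_j = 2\,T_{\mathrm{parent}(j)}$ (the paper traces the root-to-$i$ path to reach the same $2^{\depth(i)}$ count) and multiply by the constant number of $B_i$-solves per call, and for part (2) you charge $\mathcal{O}(\numOutCoupled{i})$ solves to the body of \textsc{computeArrowheadSchur} at $i$ and $\mathcal{O}(\numInCoupled{i})$ to the non-recursive \textsc{outgoingSchurBlock} call made by $i$'s parent, exactly as the paper does. The only cosmetic difference is your remark that some solves could be reused across loop iterations, which does not affect the asymptotic counts.
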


\begin{proof}
  \begin{enumerate}
  \item
    Let us examine a vertex $i \in \Vertices$ at a depth of $l \geq 0$. Let
    $\rootVert = i_{1}, i_2, \ldots, i_{l + 1} = i$ be the vertices on the
    $(\rootVert, i)$-path in $\Graph$. During the execution of
    Algorithm~\ref{alg:arrowhead_direct_method} at the parent vertex
    $i_{l}$, the algorithm recurses into vertex $i_{l + 1}$ exactly twice:
    First during the computation of the right-hand sides $\hat{x}_{l}$ in
    line~\ref{alg:arrowhead_direct_method:first_recursion}, and
    then in line~\ref{alg:arrowhead_direct_method:second_recursion}
    to assemble the solution $x_{\leq i_{l}}$ based on the
    solution of the Schur complement. Thus, each time
    Algorithm~\ref{alg:arrowhead_direct_method} is executed at vertex $i_{l}$,
    it recurses into vertex $i$ twice. The same holds for $i_{l}$ and its
    parent vertex $i_{l - 1}$ and so on up to the root $\rootVert$ where
    the algorithm is executed once. In total,
    Algorithm~\ref{alg:arrowhead_direct_method} is executed $2^{l}$ times for vertex $i$.
    Each time the algorithm is executed at vertex $i$, a system involving $B_i$ is solved once
    at line~\ref{alg:arrowhead_direct_method:leaf_solve} if $i$ is a leaf
    and twice at lines~\ref{alg:arrowhead_direct_method:inner_first_solve}
    and~\ref{alg:arrowhead_direct_method:inner_second_solve} respectively
    if $i$ is an inner vertex, yielding a total of $\Theta(2^{l})$ solves.
  \item The result follows from the structure of
    Algorithm~\ref{alg:arrowhead_schur}: The computation
    of the Schur complement $\Schur_{\leq i}$ does not require a
    recursion beyond the child vertices of $i$ itself. Consequently,
    both functions in the algorithm are executed exactly once for
    each vertex $i$. At vertex $i$ a system involving $B_{i}$ is solved once
    for each of the rows of $\outMat{k_{l}}$ for all $l \in \{ 1, \ldots, r_{i} \}$ at
    line~\ref{alg:arrowhead_schur:outgoing_solve} and twice
    for each row in $\incMat{k_{i}}$ at lines
    \ref{alg:arrowhead_schur:first_incoming_solve} and
    \ref{alg:arrowhead_schur:second_incoming_solve} of
    Algorithm~\ref{alg:arrowhead_schur_outgoing}
    respectively,
    yielding a total number of $\mathcal{O}(\numCoupled_{i})$ solves.
    \qedhere
  \end{enumerate}
\end{proof}
Based on these bounds it is clear that the total number
solved systems involving $B_{i}$ summed over all vertices
during an application of
Algorithm~\ref{alg:arrowhead_direct_method} at $\rootVert$ is in
$\Theta(2^{\heightGraph})$ where systems at the leaves of $\Graph$ are
solved most often. This running time is exponential rather than
polynomial for general graphs $\Graph$, for which
$\heightGraph \in \Theta(\numBlocks)$.  Specifically, a worst-case
instance for the algorithm consists of a tree composed of a single
path having a height of $\numBlocks - 1$. Conversely, if the height is
sufficiently small compared to the input size of the problem, the
complexity remains polynomially bounded. This holds in
particular for full proper trees, where height grows
logarithmically in the number of vertices.
In contrast to the solution of the system based on
Algorithm~\ref{alg:arrowhead_direct_method}, the computation of the
Schur blocks using Algorithm~\ref{alg:arrowhead_schur} is polynomial
regardless of the height of $\Graph$, which is due to the fact that
the recursion is much more limited during the computation of the Schur
complement blocks.

Lastly, we would like to put these results in the context of those
obtained in~\cite{parallel_interior}: Firstly,~\citeauthor{parallel_interior}
consider the `symmetric bordered
block-diagonal structure' as one of several exploitable structures,
another one being `rank-$k$ correcting matrices'. They examine
these structures from a software design framework and show how they
can be seen as a set of data structures with common functionality
which can be mapped onto a class hierarchy in object-oriented
software. Consequently, since these data structures are expected to behave
like interchangeable black-box components, the corresponding analysis
of their interplay is limited. For instance, the complexity
laid out here is not examined in~\cite{parallel_interior} and
the computation of the required Schur complements according
to their framework would also incur an exponential running time,
as opposed to the polynomial running time of Algorithm~\ref{alg:arrowhead_schur}.


\section{Nested Arrowhead Structure}
\label{sec:recursive_preconds}

\subsection{A Block-Diagonal Preconditioner}
\label{sec:nested_block_diagonal}

In this section, we continue the investigation of the nested arrowhead
structure introduced in Section~\ref{sec:direct_method}. While this
section was based on a thorough examination of the method
by \citeauthor{parallel_interior}, we now proceed to
introduce several entirely new approaches based on problem-specific preconditioners,
which we define recursively from the leaves of the graph $\Graph$ towards its root.
Specifically, we let
$\Precond_{\leq i}$ be the preconditioner associated with the
subtree of $\Graph$ rooted at $i \in \Vertices$, where we
denote by $\Precond$ the preconditioner $\Precond_{\leq \rootVert}$ for the entire tree.
If a vertex $i \in \Vertices$ is a leaf, we always
let $\Precond_{\leq i} \define B_{i}$. If $i$ is a inner vertex we let
$\outArcs{i} = (a_{k_{1}} = (i, j_{1}), \ldots, a_{k_{r_{i}}} = (i, j_{r_{i}}))$ be
the outgoing arcs of $i$ and define $\Precond_{\leq i}$ recursively
based on the preconditioners $\Precond_{\leq j_{l}}$. Our first preconditioner is
called the \emph{block-diagonal preconditioner} defined by
\begin{equation}
  \label{eq:block_diagonal_preconditioner}
  \Precond^{\diag}_{\leq i} \define \blkdiag \left( \Precond^{\diag}_{\leq j_{1}}, \ldots, \Precond^{\diag}_{\leq j_{r_{i}}}, B_{i}, - \mathcal{D}_{\leq i} \right).
\end{equation}
This preconditioner is fairly simple in its structure, approximating
the matrices $B_i$ while disregarding the coupling matrices
$\mathcal{C}_{\leq i}^{-}$ and $\mathcal{C}_{\leq i}^{+}$
completely. This preconditioner can, however, only be applied if all
matrices $D_{k}$ are invertible, which holds upon application of the sequential
homotopy or interior point methods but not for KKT systems arising from quadratic programs
of an SQP method.

\subsection{The Hook Preconditioner}
\label{sec:nested_hook}
A class of preconditioners can be derived based on the Schur complements
$\Schur_{\leq i}$ introduced above. Recall that as opposed to the
direct method in Algorithm~\ref{alg:arrowhead_direct_method}, these
preconditioners can be efficiently computed using
Algorithm~\ref{alg:arrowhead_schur}. We, of course, again work
under Assumption~\ref{ass:direct_method_assumptions} to ensure the
Schur complements exist and are positive definite.
The \emph{hook preconditioner} $\Precond^{\hook}$ is defined as
\begin{equation}
  \label{eq:hook_preconditioner}
  \Precond^{\hook}_{\leq i}
  \define
  \adjustbox{valign=c}{
    \begin{tikzpicture}[inner sep=0pt, minimum width=0cm]
      \begin{scope}
        \matrix (R) [every node/.style={minimum width=.9cm, minimum height=.6cm}, matrix of math nodes, left delimiter=(, right delimiter=)]
      {
        |(block-begin)| \Precond^{\hook}_{\leq  j_{1}} & & & & \\
        & \ddots & & & \\
        & & |(pcenter)| \Precond^{\hook}_{\leq  j_{r_{i}}} & & \\
        & & & |(block-end)| B_i & \\
        |(C-begin)| & -\mathcal{C}_{\leq i}^{-} & |(C-end)| & \mathcal{C}_{\leq i}^{+} & -\Schur_{\leq i} \\
      };
    \end{scope}
    \begin{scope}[layer=background layer]
      \fill[black!5,rounded corners=0.1cm] (C-begin.north west) rectangle (C-end.south east) {};
    \end{scope}
    \end{tikzpicture}
  }.
\end{equation}
Note that while $\Precond^{\hook}_{\leq i}$ is invertible under
Assumption~\ref{ass:direct_method_assumptions}, it is not symmetric,
necessitating the use of GMRES~\cite{gmres}, for instance, rather than the
MINRES method~\cite{minres} which is less operation-intensive per iteration.
For any inner vertex $i$ the application of the inverse of
$\Precond^{\hook}_{\leq i}$ to a given right-hand side
$h^{T}_{\leq i} = (h^{T}_{\leq j_{1}}, \ldots, h^{T}_{\leq j_{r_{i}}},\allowbreak h^{T}_{i},\allowbreak
f^{T}_{j_{1}}, \ldots, f^{T}_{j_{r_{i}}})$ is carried out by conducting the
following steps:
\begin{enumerate}
\item
  Recursively compute products
  $x_{\leq j_{l}} \gets (\Precond^{\hook}_{\leq j_l})^{-1} h_{\leq j_{l}}$
  for $l = 1, \ldots, r_{i}$ and $x_{i} \gets B_{i}^{-1} h_{i}$.
\item
  Compute right-hand side
  \begin{equation*}
    \hat{z} \gets
    - \mathcal{C}_{\leq i}^{-}
    \begin{pmatrix}
      x_{\leq j_{1}}^{T} & \ldots & x_{\leq j_{r_{i}}}^{T}
    \end{pmatrix}^{T}
    + \mathcal{C}_{\leq i}^{+} x_{i}
    -
    \begin{pmatrix}
      f_{ j_{1}}^{T} & \ldots & f_{ j_{r_{i}}}^{T}
    \end{pmatrix}^{T}.
  \end{equation*}
\item
  Compute solution
  \begin{equation*}
    \begin{pmatrix}
      y_{i_{1}}^{T} & \ldots & y_{i_{r_{i}}}^{T}
    \end{pmatrix}^{T}
    \gets \Schur_{\leq i}^{-1} \hat{z}.
  \end{equation*}
\end{enumerate}
Let us now turn to the theoretical properties of $\Precond^{\hook}_{\leq  i}$.
To begin, we observe that if $i$ is a leaf of $\Graph$,
then $\Precond^{\hook}_{\leq  i} = B_{i}$ is a perfect preconditioner, which yields the correct solution after
a single preconditioned step. If $i$ has a height of one rather than being a leaf, we see
that
\begin{equation*}
  \Precond^{\hook}_{\leq  i}
  =
  \adjustbox{valign=c}{
    \begin{tikzpicture}[inner sep=0pt]
      \matrix (R) [every node/.style={minimum width=.9cm, minimum height=.6cm}, matrix of math nodes,left delimiter=(, right delimiter=)]
      {
        |(block-begin)| B_{j_{1}} (= \mathcal{B}_{\leq j_{1}}) & & & & \\
        & \ddots & & & \\
        & & B_{j_{r_{i}}} (= \mathcal{B}_{\leq j_{r_{i}}}) & & \\
        & & & |(block-end)| B_i & \\
        |(C-begin)| & -\mathcal{C}_{\leq i}^{-} & |(C-end)| & \mathcal{C}_{\leq i}^{+} & - \Schur_{\leq i} \\
      };
      \begin{scope}[layer=background layer]
        \fill[black!5,rounded corners=0.1cm] (C-begin.north west) rectangle (C-end.south east) {};
      \end{scope}
    \end{tikzpicture}
  }.
\end{equation*}
Simple calculations show that the preconditioned matrix is then given by
\begin{equation}
  \label{eq:hook_preconditioned_height_one}
  (\Precond^{\hook}_{\leq  i})^{-1} \mathcal{B}_{\leq i}
  =
  \adjustbox{valign=c}{
    \begin{tikzpicture}[inner sep=0pt]
      \matrix (R) [every node/.style={minimum width=.9cm, minimum height=.6cm}, matrix of math nodes,left delimiter=(, right delimiter=)]
      {
        |(block-begin)| \identMat & & & & |(Z-begin)| \\
        & \ddots & & & -\mathcal{Z}^{-}_{\leq i} \\
        & & \identMat & & |(Z-end)| \\
        & & & |(block-end)| \identMat & \mathcal{Z}^{+}_{\leq i} \\
        & & & & \identMat \\
      };
      \begin{scope}[layer=background layer]
        \fill[black!5,rounded corners=0.1cm] (Z-begin.north west) rectangle (Z-end.south east) {};
      \end{scope}
    \end{tikzpicture}
  },
\end{equation}
where $\mathcal{Z}^{-}_{\leq i} \define \blkdiag(B_{j_{1}}^{-1} (C_{k_{1}})^{T}, \ldots, B_{j_{r_{i}}}^{-1} (C_{k_{r_{i}}})^{T})$
and $\mathcal{Z}^{+}_{\leq i} \define B_{i}^{-1} (\mathcal{C}_{\leq i}^{+})^{T}$.
Consequently, we can make the following observation:
\begin{lem}
  \label{lem:hook_depth_one}
  If the height of vertex $i$ is one, then the minimal polynomial of the
  preconditioned matrix~\eqref{eq:hook_preconditioned_height_one}
  is given by%
  \begin{equation*}
    \mu_{(\Precond^{\hook}_{\leq i})^{-1} \mathcal{B}_{\leq i}}(x) = (1 - x)^2.
  \end{equation*}
\end{lem}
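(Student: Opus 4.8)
The plan is to read the minimal polynomial directly off the explicit form of the preconditioned matrix in~\eqref{eq:hook_preconditioned_height_one}. Write $M \define (\Precond^{\hook}_{\leq i})^{-1} \mathcal{B}_{\leq i}$ and split $M = \identMat + N$ with $N \define M - \identMat$. Grouping the $x$-type coordinates (the blocks $x_{\leq j_{1}}, \ldots, x_{\leq j_{r_{i}}}, x_{i}$) into one coarse block and the coupling coordinates $y_{j_{1}}, \ldots, y_{j_{r_{i}}}$ into another, the matrix $N$ takes the coarse $2 \times 2$ block form $N = \left(\begin{smallmatrix} \zeroMat & W \\ \zeroMat & \zeroMat \end{smallmatrix}\right)$ with $W \define \left(\begin{smallmatrix} -\mathcal{Z}^{-}_{\leq i} \\ \mathcal{Z}^{+}_{\leq i} \end{smallmatrix}\right)$; the point is that in~\eqref{eq:hook_preconditioned_height_one} every block off the diagonal sits in the last (coupling) block column, while the last block row is exactly $(\,\zeroMat\ \cdots\ \zeroMat\ \identMat\,)$.

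The main step is then immediate: because of this strictly ``block-upper-triangular'' structure, $N^{2} = \left(\begin{smallmatrix} \zeroMat & W \\ \zeroMat & \zeroMat \end{smallmatrix}\right)^{2} = \zeroMat$ — equivalently, the column space of $N$ lies in the $x$-type subspace, on which $N$ itself vanishes. Hence $(\identMat - M)^{2} = N^{2} = \zeroMat$, so the minimal polynomial $\mu_{M}$ divides $(1-x)^{2}$. To upgrade this to $\mu_{M}(x) = (1-x)^{2}$ it suffices to check $M \neq \identMat$, i.e.\ $N \neq \zeroMat$; this holds since $\mathcal{Z}^{+}_{\leq i} = B_{i}^{-1}(\mathcal{C}_{\leq i}^{+})^{T}$ is nonzero whenever some outgoing coupling matrix $\outMat{k_{l}}$ is nonzero (the only case of interest — if all coupling blocks incident to the subtree rooted at $i$ vanished, the subtree would decouple completely and the claim would be vacuous).

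There is essentially no obstacle: the lemma is a restatement of the nilpotency $N^{2} = \zeroMat$. The only points that warrant a moment's care are (i) confirming that the coarse $2 \times 2$ partition above is legitimate, i.e.\ that in~\eqref{eq:hook_preconditioned_height_one} the coupling block row carries nothing but the identity on its diagonal, and (ii) noting the degenerate, coupling-free situation in which $N = \zeroMat$ and $\mu_{M}(x) = 1-x$ instead; both are read off directly from~\eqref{eq:hook_preconditioned_height_one} together with the definitions of $\mathcal{Z}^{-}_{\leq i}$ and $\mathcal{Z}^{+}_{\leq i}$.
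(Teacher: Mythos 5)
Your proof is correct and follows essentially the same route as the paper: both arguments reduce the claim to the nilpotency $(\identMat - (\Precond^{\hook}_{\leq i})^{-1}\mathcal{B}_{\leq i})^{2} = \zeroMat$, read off from the explicit block form~\eqref{eq:hook_preconditioned_height_one}. Your additional remark that one must rule out the degenerate coupling-free case (where the minimal polynomial degenerates to $1-x$) is a small point of care that the paper's proof silently omits, but it does not change the substance of the argument.
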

\begin{proof}
  The minimal polynomial of the
  preconditioned matrix~\eqref{eq:hook_preconditioned_height_one} is given by $(1 - x)^2$ since the square of
  the matrix
  \begin{equation*}
    \identMat -  (\Precond^{\hook}_{\leq  i})^{-1} \mathcal{B}_{\leq i} =
    \adjustbox{valign=c}{
    \begin{tikzpicture}[inner sep=0pt]
      \matrix (R) [every node/.style={minimum width=.9cm, minimum height=.6cm}, matrix of math nodes,left delimiter=(, right delimiter=)]
      {
        |(block-begin)|\zeroMat & & & & |(Z-begin)| \\
        & \ddots & & & \mathcal{Z}^{-}_{\leq i} \\
        & & \zeroMat & & |(Z-end)| \\
        & & & |(block-end)| \zeroMat & - \mathcal{Z}^{+}_{\leq i} \\
        & & & & \zeroMat \\
      };
      \begin{scope}[layer=background layer]
        \fill[black!5,rounded corners=0.1cm] (Z-begin.north west) rectangle (Z-end.south east) {};
      \end{scope}
    \end{tikzpicture}
  }
\end{equation*}
  evaluates to zero.
\end{proof}
\begin{cor}
  \label{cor:hook_convergence}
  If the height of vertex $i$ is one, then the  GMRES method applied
  to $\mathcal{B}_{\leq i}$ with preconditioner $\Precond^{\hook}_{\leq  i}$
  converges in at most two iterations in exact arithmetic.
\end{cor}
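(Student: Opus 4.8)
The plan is to combine Lemma~\ref{lem:hook_depth_one} with the classical convergence bound for GMRES expressed in terms of the minimal polynomial of the (left-)preconditioned operator. Recall that preconditioned GMRES applied to $\mathcal{B}_{\leq i}$ with preconditioner $\Precond^{\hook}_{\leq i}$ produces, after $m$ steps, an iterate whose residual in the preconditioned system equals $\min_{p} \lVert p(A) r_{0} \rVert$, where $A \define (\Precond^{\hook}_{\leq i})^{-1} \mathcal{B}_{\leq i}$ is precisely the matrix displayed in~\eqref{eq:hook_preconditioned_height_one}, $r_{0}$ is the initial preconditioned residual, and the minimum ranges over all polynomials $p$ of degree at most $m$ satisfying $p(0) = 1$.

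First I would observe that under Assumption~\ref{ass:direct_method_assumptions} both $\Precond^{\hook}_{\leq i}$ and $\mathcal{B}_{\leq i}$ are invertible, so $A$ is invertible and $0$ is not an eigenvalue of $A$; in particular $x = 0$ is not a root of the minimal polynomial $\mu_{A}$. By Lemma~\ref{lem:hook_depth_one} we have $\mu_{A}(x) = (1 - x)^{2}$, which moreover already satisfies $\mu_{A}(0) = 1$. Hence the degree-two polynomial $p(x) \define (1 - x)^{2}$ is admissible in the GMRES minimization at step $m = 2$ and satisfies $p(A) = 0$ by definition of the minimal polynomial. It follows that the preconditioned residual vanishes after at most two steps, i.e.\ GMRES has recovered the exact solution of $\mathcal{B}_{\leq i} x_{\leq i} = h_{\leq i}$ within two iterations (and trivially sooner if $r_{0} = 0$, or if $r_{0}$ already lies in an invariant subspace on which $A$ acts as the identity).

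The argument needs no real computation; the only points deserving care are (i) checking that the operator driving GMRES is exactly the matrix analysed in Lemma~\ref{lem:hook_depth_one}, which is consistent with the description above of how $(\Precond^{\hook}_{\leq i})^{-1}$ is applied (left preconditioning), noting also that right preconditioning would give a similar matrix and hence the same minimal polynomial; and (ii) justifying the normalisation $p(0) = 1$, which is where invertibility of $A$ is used. Neither constitutes a genuine obstacle, so the corollary is an essentially immediate consequence of Lemma~\ref{lem:hook_depth_one} together with the minimal-polynomial bound for GMRES~\cite{gmres}.
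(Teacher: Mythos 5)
Your argument is correct and is essentially the same as the paper's, which simply cites Proposition~2 of~\cite{gmres} — precisely the minimal-polynomial bound you invoke — applied to the conclusion of Lemma~\ref{lem:hook_depth_one}. Your write-up merely spells out the details (invertibility of the preconditioned operator, the normalisation $p(0)=1$) that the citation leaves implicit.
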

\begin{proof}
  See~\cite[Proposition 2]{gmres}.
\end{proof}
\subsection{Recursive Preconditioning}
\label{sec:nested_recursive}
Naturally, for vertices with larger heights, the performance of the
hook preconditioner can be assumed to degrade. However, we can make use
of the preconditioner within an iterative scheme in order to solve the
system $\mathcal{B}_{\leq i} x_{\leq i} = h_{\leq i}$. To this end,
we define $\Precond^{\recursive}_{\leq  i}$ to be a \emph{recursive preconditioner}
if it satisfies the following conditions:
\begin{enumerate}
\item
  If $i$ is a leaf of $\Graph$, then it holds that $\Precond^{\recursive}_{\leq  i} = B_{i}$.
\item
  If $i$ is an inner vertex, then $\Precond^{\recursive}_{\leq  i}$  satisfies the following recursion:
  \begin{equation}
    \label{eq:recursive_preconditioner}
    \Precond^{\recursive}_{\leq  i}
    \define
    \adjustbox{valign=c}{
      \begin{tikzpicture}[inner sep=0pt]
        \matrix (R) [every node/.style={minimum width=.8cm, minimum height=.6cm}, matrix of math nodes,left delimiter=(, right delimiter=)]
        {
          |(block-begin)| \Precond^{\recursive}_{\leq  j_{1}} & & & & \\
          & \ddots & & & \\
          & & \Precond^{\recursive}_{\leq  j_{r_{i}}} & & \\
          & & & |(block-end)| B_i & \\
          |(C-begin)| & -\mathcal{C}_{\leq i}^{-} & |(C-end)| & \mathcal{C}_{\leq i}^{+} & -\Schur_{\leq i} \\
        };
        \begin{scope}[layer=background layer]
          \fill[black!5,rounded corners=0.1cm] (C-begin.north west) rectangle (C-end.south east) {};
        \end{scope}
      \end{tikzpicture}
    }.
  \end{equation}
\end{enumerate}
Based on a recursive preconditioner $\Precond^{\recursive}_{\leq i}$ and an initial solution $x_{\leq i}^{(0)}$, we consider
a linear fixed-point iteration based on the approximation
$(\Precond^{\recursive}_{\leq  i})^{-1} \approx \mathcal{B}_{\leq i}^{-1}$, which is given by
\begin{equation}
  \label{eq:recursive_iterative_method}
  x_{\leq i}^{(k + 1)} \gets x_{\leq i}^{(k)} + (\Precond_{\leq  i}^{\recursive})^{-1} \left( h_{\leq i} - \mathcal{B}_{\leq i} x_{\leq i}^{(k)}  \right).
\end{equation}
We can make the following observation:
\begin{lem}
  \label{lem:exact_recursive_solution}
  Let $i$ be a vertex of $\Graph$. If the recursive preconditioner
  $\Precond^{\recursive}_{\leq i}$ is exact on the child vertices $j_{1}, \ldots, j_{r_{i}}$ of $i$,
  \ie
  \begin{equation*}
    \Precond^{\recursive}_{\leq  j_{l}} = \mathcal{B}_{\leq j_{l}}
    \quad  \forall \:l = 1, \ldots, r_{i},
  \end{equation*}
  then the iterative method given by
  \eqref{eq:recursive_iterative_method} converges in at most two
  iterations in exact arithmetic, independently of the initial iterate $x_{\leq i}^{(0)}$.
\end{lem}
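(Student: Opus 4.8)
The plan is to follow the template of Lemma~\ref{lem:hook_depth_one}: under the exactness hypothesis the iteration matrix of \eqref{eq:recursive_iterative_method} turns out to be nilpotent of index two, so the error is annihilated after two steps regardless of the initial iterate. First I would record the error recursion. Writing $x^{\ast}_{\leq i} \define \mathcal{B}_{\leq i}^{-1} h_{\leq i}$ and $e^{(k)} \define x^{(k)}_{\leq i} - x^{\ast}_{\leq i}$, subtracting $x^{\ast}_{\leq i}$ from \eqref{eq:recursive_iterative_method} and using $\mathcal{B}_{\leq i} x^{\ast}_{\leq i} = h_{\leq i}$ gives $e^{(k+1)} = \bigl(\identMat - (\Precond^{\recursive}_{\leq i})^{-1} \mathcal{B}_{\leq i}\bigr) e^{(k)}$. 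Hence it suffices to prove $\bigl(\identMat - (\Precond^{\recursive}_{\leq i})^{-1} \mathcal{B}_{\leq i}\bigr)^{2} = \zeroMat$, which then yields $e^{(2)} = \zeroMat$ and thus convergence after at most two iterations, independently of $x^{(0)}_{\leq i}$. The case that $i$ is a leaf is immediate, since then $\Precond^{\recursive}_{\leq i} = B_{i} = \mathcal{B}_{\leq i}$; so assume $i$ is an inner vertex.

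Next I would pass to a $2 \times 2$ block view. Collecting the diagonal blocks $\mathcal{B}_{\leq j_{1}}, \ldots, \mathcal{B}_{\leq j_{r_{i}}}, B_{i}$ into a single block $\mathcal{A}$ and setting $\mathcal{C} \define \left(\begin{smallmatrix} -\mathcal{C}_{\leq i}^{-} & \mathcal{C}_{\leq i}^{+} \end{smallmatrix}\right)$, the definition of $\mathcal{B}_{\leq i}$ reads $\mathcal{B}_{\leq i} = \left(\begin{smallmatrix} \mathcal{A} & \mathcal{C}^{T} \\ \mathcal{C} & -\mathcal{D}_{\leq i} \end{smallmatrix}\right)$, and by Assumption~\ref{ass:direct_method_assumptions} its Schur complement with respect to $\mathcal{A}$ is precisely $\Schur_{\leq i} = \mathcal{C}\mathcal{A}^{-1}\mathcal{C}^{T} + \mathcal{D}_{\leq i}$. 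Invoking the hypothesis $\Precond^{\recursive}_{\leq j_{l}} = \mathcal{B}_{\leq j_{l}}$ for all $l$ makes the first $r_{i}+1$ diagonal blocks of $\Precond^{\recursive}_{\leq i}$ in \eqref{eq:recursive_preconditioner} coincide with those of $\mathcal{B}_{\leq i}$, so that $\Precond^{\recursive}_{\leq i} = \left(\begin{smallmatrix} \mathcal{A} & \zeroMat \\ \mathcal{C} & -\Schur_{\leq i} \end{smallmatrix}\right)$; that is, $\Precond^{\recursive}_{\leq i}$ agrees with $\mathcal{B}_{\leq i}$ except that the coupling block $\mathcal{C}^{T}$ in the last block column has been dropped and the trailing block $-\mathcal{D}_{\leq i}$ has been replaced by $-\Schur_{\leq i}$.

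Then the computation is routine. Since $\Precond^{\recursive}_{\leq i}$ is block lower triangular, $(\Precond^{\recursive}_{\leq i})^{-1} = \left(\begin{smallmatrix} \mathcal{A}^{-1} & \zeroMat \\ \Schur_{\leq i}^{-1}\mathcal{C}\mathcal{A}^{-1} & -\Schur_{\leq i}^{-1} \end{smallmatrix}\right)$, and multiplying out gives $(\Precond^{\recursive}_{\leq i})^{-1}\mathcal{B}_{\leq i} = \left(\begin{smallmatrix} \identMat & \mathcal{A}^{-1}\mathcal{C}^{T} \\ \zeroMat & \identMat \end{smallmatrix}\right)$: the $(2,1)$-block vanishes because $\Schur_{\leq i}^{-1}\mathcal{C} - \Schur_{\leq i}^{-1}\mathcal{C} = \zeroMat$, and the $(2,2)$-block equals $\Schur_{\leq i}^{-1}\bigl(\mathcal{C}\mathcal{A}^{-1}\mathcal{C}^{T} + \mathcal{D}_{\leq i}\bigr) = \identMat$ by the Schur complement identity. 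Therefore $\identMat - (\Precond^{\recursive}_{\leq i})^{-1}\mathcal{B}_{\leq i} = \left(\begin{smallmatrix} \zeroMat & -\mathcal{A}^{-1}\mathcal{C}^{T} \\ \zeroMat & \zeroMat \end{smallmatrix}\right)$, whose square is obviously $\zeroMat$, which closes the argument. (Specializing to the case where every $j_{l}$ is a leaf recovers exactly Lemma~\ref{lem:hook_depth_one}, since there $\mathcal{B}_{\leq j_{l}} = B_{j_{l}}$ and $\Precond^{\recursive}_{\leq i}$ reduces to $\Precond^{\hook}_{\leq i}$.)

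I do not expect any genuine obstacle here: the only point requiring care is the bookkeeping of signs and block placement when matching $\Precond^{\recursive}_{\leq i}$, $\mathcal{B}_{\leq i}$, and the Schur complement $\Schur_{\leq i}$ against the $2\times 2$ template above — in particular checking that the hypothesis is used \emph{only} to equate diagonal blocks, while the absence of the off-diagonal coupling block in the last block column of $\Precond^{\recursive}_{\leq i}$ is what produces the unipotent upper-triangular structure. There is no analytic difficulty, and (as in Corollary~\ref{cor:hook_convergence}) one could alternatively phrase the conclusion in terms of a minimal polynomial $(1-x)^{2}$ if a GMRES-style statement were preferred.
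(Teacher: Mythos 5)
Your proposal is correct and follows essentially the same route as the paper: both proofs reduce to showing that $\identMat - (\Precond^{\recursive}_{\leq i})^{-1}\mathcal{B}_{\leq i}$ is nilpotent of index two by computing the preconditioned matrix explicitly (your $2\times 2$ block/Schur-complement bookkeeping is just a compact restatement of the paper's ``simple calculations''). The only cosmetic difference is the final step: you conclude via the error recursion $e^{(k+1)} = (\identMat - (\Precond^{\recursive}_{\leq i})^{-1}\mathcal{B}_{\leq i})\,e^{(k)}$, whereas the paper writes out $x^{(1)}_{\leq i}$ and $x^{(2)}_{\leq i}$ and argues from the independence of $x^{(2)}_{\leq i}$ on the initial iterate; both are valid and equivalent.
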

\begin{proof}
  If $i$ is a leaf, then $\Precond^{\recursive}_{\leq i} = B_{i}$ must be
  exact. Otherwise, simple calculations show that the preconditioned matrix is given by
  \begin{equation*}
    (\Precond^{\recursive}_{\leq  i})^{-1} \mathcal{B}_{\leq i}
    =
    \adjustbox{valign=c}{
      \begin{tikzpicture}[inner sep=0pt]
        \matrix (R) [every node/.style={minimum width=.9cm, minimum height=.6cm}, matrix of math nodes,left delimiter=(, right delimiter=)]
        {
          |(block-begin)| \identMat & & & & |(Y-begin)| \\
          & \ddots & & & -\mathcal{Y}^{-}_{\leq i} \\
          & & \identMat & & |(Y-end)| \\
          & & & |(block-end)| \identMat & \mathcal{Z}^{+}_{\leq i} \\
          & & & & \identMat \\
        };
        \begin{scope}[layer=background layer]
          \fill[black!5,rounded corners=0.1cm] (Y-begin.north west) rectangle (Y-end.south east) {};
        \end{scope}
      \end{tikzpicture}
    },
  \end{equation*}
  where $\mathcal{Y}^{-}_{\leq i} \define \blkdiag((\Precond^{\recursive}_{\leq j_{1}})^{-1} (C_{k_{1}})^{T}, \ldots, (\Precond^{\recursive}_{\leq j_{r_{i}}})^{-1} (C_{k_{r_{i}}})^{T})$
  and $\mathcal{Z}^{+}_{\leq i}$ is defined as above. As was the case before,
  the minimal polynomial of the preconditioned matrix is
  given by
  \begin{equation*}
    \mu_{(\Precond_{\leq  i}^{\recursive})^{-1} \mathcal{B}_{\leq i}}(x) = (1 - x)^2,
  \end{equation*}
  which of course implies that $(\identMat - (\Precond_{\leq  i}^{\recursive})^{-1} \mathcal{B}_{\leq i})^2 = \zeroMat$.
  Consequently, the first two computed iterates are given by
  \begin{equation*}
    \begin{aligned}
      x_{\leq i}^{(1)} &= (\identMat - (\Precond^{\recursive}_{\leq i})^{-1} \mathcal{B}_{\leq i}) x_{\leq i}^{(0)} + (\Precond^{\recursive}_{\leq i})^{-1} h_{\leq i}, \\
      x_{\leq i}^{(2)} &= (\identMat - (\Precond^{\recursive}_{\leq i})^{-1} \mathcal{B}_{\leq i})^{2} x_{\leq i}^{(0)} + (2 \identMat - (\Precond^{\recursive}_{\leq i})^{-1} \mathcal{B}_{\leq i})(\Precond^{\recursive}_{\leq i})^{-1} h_{\leq i} \\
      &= (2 \identMat - (\Precond^{\recursive}_{\leq i})^{-1} \mathcal{B}_{\leq i})(\Precond^{\recursive}_{\leq i})^{-1} h_{\leq i}.
    \end{aligned}
  \end{equation*}
  Note that the value of $x_{\leq i}^{(2)}$ is independent of the initial
  solution $x_{\leq i}^{(0)}$. This holds in particular
  if we set
  $x_{\leq i}^{(0)} = \mathcal{B}_{\leq i}^{-1} h_{\leq i} \enifed x^{*}$,
  in which case it follows that $x_{\leq i}^{(k)} \equiv x^{*}$ for all $k \in \Nat$
  and specifically for $k = 2$. However, due to the independence of $x_{\leq i}^{(2)}$
  it must hold that $x_{\leq i}^{(2)} = x^{*}$ for \emph{all} choices of $x_{\leq i}^{(0)}$.
\end{proof}

\subsection{Exact Preconditioning}
\label{sec:nested_exact}
Based on the results obtained previously, we can obtain an exact polynomial
preconditioner $\Precond^{\exact}$ as an alternative to the direct
method introduced in Section~\ref{sec:direct_method} based on
Lemma~\ref{lem:exact_recursive_solution}. We can build such a
preconditioner bottom up, starting with single vertices while
maintaining the conditions of Lemma~\ref{lem:hook_depth_one}. For each leaf
$i \in \Vertices$, we set $\Precond^{\exact}_{\leq i} \define B_i$. For each
vertex of height one, we consider the recursive
preconditioner $\Precond^{\recursive}_{\leq i}$ following the construction~\eqref{eq:recursive_preconditioner},
using $\Precond^{\exact}_{\leq j_l}$ for $l = 1, \ldots, r_{i}$.
This preconditioner satisfies the conditions of
Lemma~\ref{lem:exact_recursive_solution} yielding
the exact solution of $\mathcal{B}_{\leq i} x_{\leq i} = h_{\leq i}$
by applying~\eqref{eq:recursive_iterative_method} twice. This recursion is linear
in the right-hand side $h_{\leq i}$ and can be used to define $\Precond^{\exact}_{\leq i}$:
\begin{equation*}
  x_{\leq i}^{(2)} = (2 \identMat - (\Precond^{\recursive}_{\leq i})^{-1} \mathcal{B}_{\leq i}) (\Precond^{\recursive}_{\leq i})^{-1} h_{\leq i} \enifed (\Precond^{\exact}_{\leq i})^{-1} h_{\leq i}.
\end{equation*}
For vertices of height two, we again perform two iterations using the
recursive construction~\eqref{eq:recursive_preconditioner} based on
the newly defined exact preconditioners for vertices of height one,
yielding an exact preconditioner $\Precond^{\exact}_{\leq i}$ for each vertex $i \in \Vertices$ of height two. We
follow this process until we reach the root $\rootVert$,
obtaining an exact preconditioner for the recursive variant of the
original problem~\eqref{eq:2by2_coupled_system}:
\begin{cor}
  \label{cor:exact_convergence}
  $\Precond^{\exact}_{\leq i}$ is exact for every vertex $i \in \Vertices$.
\end{cor}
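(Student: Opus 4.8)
The plan is to prove the corollary by induction on the height $\height(i)$, which is precisely the order in which the construction in Section~\ref{sec:nested_exact} builds the operators $\Precond^{\exact}_{\leq i}$. Throughout, I read ``exact'' as $\Precond^{\exact}_{\leq i} = \mathcal{B}_{\leq i}$, equivalently $(\Precond^{\exact}_{\leq i})^{-1} = \mathcal{B}_{\leq i}^{-1}$.

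For the base case $\height(i) = 0$ the vertex $i$ is a leaf and $\Precond^{\exact}_{\leq i} \define B_i = \mathcal{B}_{\leq i}$ by definition, so there is nothing to prove. For the inductive step I would assume the claim for all vertices of height at most $h$ and take $i$ of height $h + 1$. Each child $j_l$ of $i$ then has height at most $h$, so $\Precond^{\exact}_{\leq j_l} = \mathcal{B}_{\leq j_l}$ by the inductive hypothesis. I would form the recursive preconditioner $\Precond^{\recursive}_{\leq i}$ of~\eqref{eq:recursive_preconditioner} with these exact child preconditioners in its diagonal slots; since this substitution leaves the arrowhead block pattern and the corner Schur complement $\Schur_{\leq i}$ unchanged, $\Precond^{\recursive}_{\leq i}$ is invertible under Assumption~\ref{ass:direct_method_assumptions}, so the iteration~\eqref{eq:recursive_iterative_method} is legitimate. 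As the blocks placed in the child slots satisfy $\Precond^{\exact}_{\leq j_l} = \mathcal{B}_{\leq j_l}$, the preconditioner $\Precond^{\recursive}_{\leq i}$ is exact on the children of $i$ in the sense of Lemma~\ref{lem:exact_recursive_solution}. That lemma then gives $x_{\leq i}^{(2)} = \mathcal{B}_{\leq i}^{-1} h_{\leq i}$ for every right-hand side $h_{\leq i}$ and every initial iterate, and, as recorded in Section~\ref{sec:nested_exact}, the map $h_{\leq i} \mapsto x_{\leq i}^{(2)}$ is the linear operator $(2\identMat - (\Precond^{\recursive}_{\leq i})^{-1}\mathcal{B}_{\leq i})(\Precond^{\recursive}_{\leq i})^{-1} \enifed (\Precond^{\exact}_{\leq i})^{-1}$. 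Since this operator agrees with $\mathcal{B}_{\leq i}^{-1}$ on all inputs it coincides with $\mathcal{B}_{\leq i}^{-1}$, hence is invertible, $\Precond^{\exact}_{\leq i}$ is well-defined, and $\Precond^{\exact}_{\leq i} = \mathcal{B}_{\leq i}$, closing the induction.

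I do not expect a genuine obstacle: the statement essentially repackages Lemma~\ref{lem:exact_recursive_solution} as an induction on height. The one point needing a line of care is well-definedness — a priori $(\Precond^{\exact}_{\leq i})^{-1}$ is only a composite of linear maps, and it is the identification with $\mathcal{B}_{\leq i}^{-1}$ that retroactively guarantees invertibility, so that $\Precond^{\exact}_{\leq i}$ exists. A secondary point worth stating explicitly is that at each level $\Precond^{\recursive}_{\leq i}$ is itself invertible (so the fixed-point scheme~\eqref{eq:recursive_iterative_method} makes sense), which again follows from Assumption~\ref{ass:direct_method_assumptions} because substituting the invertible matrices $\mathcal{B}_{\leq j_l}$ for the blocks $\Precond^{\recursive}_{\leq j_l}$ preserves the relevant block structure and the corner block $\Schur_{\leq i}$.
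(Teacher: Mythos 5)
Your proposal is correct and follows essentially the same route as the paper: the paper gives no separate proof of Corollary~\ref{cor:exact_convergence} because the construction in Section~\ref{sec:nested_exact} is itself an induction on height, with Lemma~\ref{lem:exact_recursive_solution} supplying the inductive step exactly as you use it. Your additional remarks on the invertibility of $\Precond^{\recursive}_{\leq i}$ and the well-definedness of $(\Precond^{\exact}_{\leq i})^{-1}$ are a sensible tightening of what the paper leaves implicit.
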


\subsection{Complexity}

In the following, we consider the complexity of applying
$(\Precond^{\hook})^{-1}$ and $(\Precond^{\exact})^{-1}$ in terms of the number of solutions
of systems involving $B_{i}$ analogous to Section~\ref{sec:direct_method}. We exclude
the computation of the Schur complements $\Schur_{\leq i}$ because
their complexity has been established in Lemma~\ref{lem:direct_running_time}
and find the following results:
\begin{lem}
  \begin{enumerate}
  \item
    The application of the inverse of $\Precond^{\hook}$ requires
    exactly one solution of a system involving $B_{i}$ for each vertex $i \in \Vertices$.
  \item
    The application of the inverse of $\Precond^{\exact}$ requires
    $\Theta( 2^{\depth(i)})$ solutions of systems involving $B_i$ for each
    vertex $i \in \Vertices$.
  \end{enumerate}
\end{lem}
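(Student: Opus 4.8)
The plan is to analyze each preconditioner by tracking how many times a system involving $B_i$ gets solved during a single application of the respective inverse, mirroring the bookkeeping already carried out in Lemma~\ref{lem:direct_running_time}.

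For the first claim, I would read off the three-step description of applying $(\Precond^{\hook}_{\leq i})^{-1}$ given just before Lemma~\ref{lem:hook_depth_one}. In step~1, for an inner vertex $i$ we recursively apply $(\Precond^{\hook}_{\leq j_l})^{-1}$ to each child block and additionally compute $x_i \gets B_i^{-1} h_i$, which is exactly one solve involving $B_i$ at vertex $i$ itself. Steps~2 and~3 involve only matrix--vector products with the coupling matrices and a single solve with $\Schur_{\leq i}$, neither of which we are counting. At a leaf, $\Precond^{\hook}_{\leq i} = B_i$ and applying its inverse is a single $B_i$-solve. Crucially, unlike the direct method, the hook recursion descends into each child subtree exactly once (there is no second pass to reassemble the solution, because the off-diagonal blocks below $B_i$ are never used in the back-substitution), so each vertex $i$ is visited exactly once over the whole recursion. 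Hence the total count is exactly one $B_i$-solve per vertex $i \in \Vertices$.

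For the second claim, I would use the bottom-up construction of $\Precond^{\exact}$ from Section~\ref{sec:nested_exact}: applying $(\Precond^{\exact}_{\leq i})^{-1}$ means running the fixed-point iteration~\eqref{eq:recursive_iterative_method} twice with the recursive preconditioner $\Precond^{\recursive}_{\leq i}$ built from the already-exact child preconditioners $\Precond^{\exact}_{\leq j_l}$. Each of the two iterations requires one application of $(\Precond^{\recursive}_{\leq i})^{-1}$ (which recursively calls $(\Precond^{\exact}_{\leq j_l})^{-1}$ once per child plus one $B_i$-solve) and one multiplication by $\mathcal{B}_{\leq i}$ (which, by the same reasoning, does not involve any $B_i$-solves beyond matrix--vector products). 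Let $T(i)$ denote the number of $B_i$-solves contributed at vertex $i$ during one application of $(\Precond^{\exact}_{\leq \rootVert})^{-1}$. The doubling in~\eqref{eq:recursive_iterative_method} means each recursive descent into a child $j_l$ happens twice per application of the parent's exact preconditioner, so the number of times $(\Precond^{\exact}_{\leq i})^{-1}$ is invoked equals $2^{\depth(i)}$. Each such invocation triggers $\Theta(1)$ solves involving $B_i$ (one in the leaf case; a bounded number across the two iterations in the inner-vertex case), giving $\Theta(2^{\depth(i)})$ in total.

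The main obstacle, such as it is, is making the counting argument for the exact preconditioner precise: one must be careful to distinguish the \emph{number of invocations} of $(\Precond^{\exact}_{\leq i})^{-1}$ from the \emph{number of $B_i$-solves per invocation}, and to confirm that neither the matrix--vector product with $\mathcal{B}_{\leq i}$ nor the back-substitution steps secretly hide additional $B_i$-solves. Once it is established that (i) each level of the exact-preconditioner recursion doubles the number of recursive calls, exactly as in part~1 of Lemma~\ref{lem:direct_running_time}, and (ii) each call costs $\Theta(1)$ solves at its own vertex, both bounds follow by the same telescoping argument used there.
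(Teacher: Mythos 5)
Your proposal is correct and follows essentially the same route as the paper's proof: part~1 by observing that the hook recursion visits each vertex exactly once and performs a single $B_i$-solve there, and part~2 by noting that each application of $(\Precond^{\exact}_{\leq i})^{-1}$ costs two $B_i$-solves and two recursions into each child, so the invocation count doubles with each level of depth, exactly as in Lemma~\ref{lem:direct_running_time}. Your additional care in verifying that the multiplication by $\mathcal{B}_{\leq i}$ and the Schur-complement steps hide no extra $B_i$-solves is a sound precaution but does not change the argument.
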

\begin{proof}
  \begin{enumerate}
  \item
    To apply $(\Precond^{\hook}_{\leq i})^{-1}$ at a vertex $i \in \Vertices$, we need
    to solve exactly one system involving $B_{i}$ and recurse into each child
    vertex of $i$ exactly once. Therefore, to apply $\Precond^{\hook}$ at the
    root vertex $\rootVert$, we have to solve exactly one system involving $B_{i}$.
  \item
    Our reasoning is analogous to the proof of Lemma~\ref{lem:direct_running_time}:
    To apply $(\Precond^{\exact}_{\leq i})^{-1}$ at a leaf $i \in \Vertices$, we
    solve a system involving $B_{i}$ exactly once. At an inner vertex $i$, we need to
    compute the product of
    \begin{equation*}
      (\Precond^{\exact}_{\leq i})^{-1} = (2 \identMat - (\Precond^{\recursive}_{\leq i})^{-1} \mathcal{B}_{\leq i}) (\Precond^{\recursive}_{\leq i})^{-1}
    \end{equation*}
    with some right-hand side. The computation of the product
    involves two solves systems involving $B_{i}$ itself as well
    as two recursions into each of the subtrees rooted at
    the child vertices $j_{1}, \ldots, j_{r_{i}}$ of $i$ in
    order to apply the inverses of the respective preconditioners $\Precond^{\exact}_{\leq j_{l}}$.
    Consequently, when moving up the tree from $i$ towards $\rootVert$,
    the number of solves of systems involving $B_{i}$ doubles each time we move from
    a vertex to its parent leading to the stated complexity.  \qedhere
  \end{enumerate}
\end{proof}
It is apparent from these results that the application of the inverse
of $\Precond^{\exact}$ has the same asymptotic complexity as the
direct method introduced in Section~\ref{sec:direct_method}, which is
exponential in the height of $\Graph$ and therefore only polynomial
for trees with logarithmic heights. Conversely, the inverse of
$\Precond^{\hook}$ can always applied in polynomial time.

\section{Non-nested Arrowhead Structure}
\label{sec:direct_preconds}

In the following, we examine the original
problem~\eqref{eq:2by2_coupled_system} more closely, particularly
focusing on the Schur complement~\eqref{eq:2by2_schur_complement},
which we assume to be positive definite. We derive a
preconditioner $\Precond$ based on the non-nested Schur complement and focus on
the efficient application of an approximate inverse of this preconditioner.
We first note that the Schur complement $\Schur$ consists of blocks of
sizes $\numCoupled_{k} \times \numCoupled_{k'}$ for each pair $(a_{k}, a_{k'})$ of arcs,
where a block is non-zero iff the arcs share a vertex.
It is however advantageous to instead examine $\Schur$
using vertex-based blocks, each of which combines the variables
of the outgoing arcs of the respective vertices. Consequently,
the only non-trivial vertex-based blocks appear for inner vertices
of the inner subgraph $\innerGraph$.

\begin{figure}[ht]
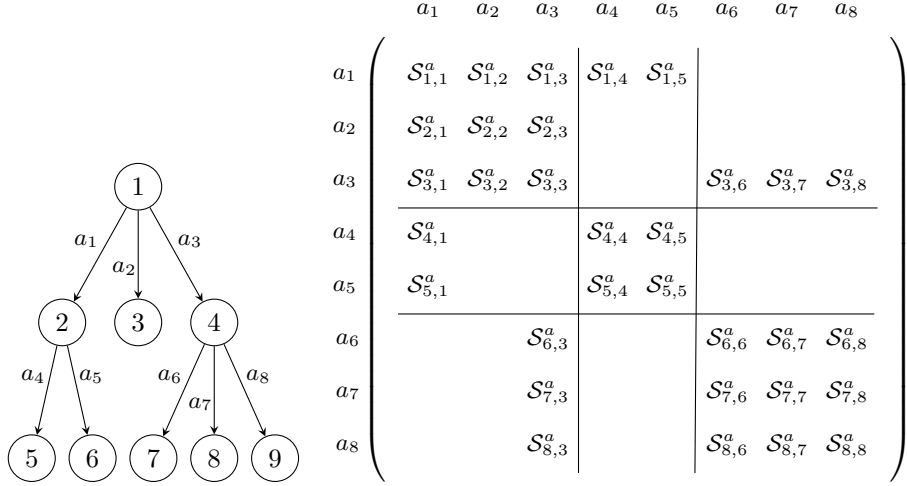

  \centering
  \begin{subfigure}{.35\textwidth}
    \centering
    \includegraphics{nonnested_tree}
  \end{subfigure}
  \begin{subfigure}{.64\textwidth}
    \centering
    \includegraphics{nonnested_tree_system}
  \end{subfigure}
  \caption{
    A graph $\Graph$ together with the Schur complement $\Schur$. The eight
    arc-based blocks (denoted here by $\Schur^{a}_{k, k'}$) can be combined into three vertex-based blocks
    corresponding to the outgoing arcs of the inner vertices, namely
    $\outArcs{1} = \{a_1, a_2, a_3\}$, $\outArcs{2} = \{a_4, a_5\}$,
    and $\outArcs{4} = \{a_6, a_7, a_8\}$.
  }
  \label{fig:nonnested_schur_blocks}
\end{figure}

Each vertex-based block $(\Schur_{i, j})_{i, j \in \innerVertices}$ has a size
of size $\numOutCoupled{i} \times \numOutCoupled{j}$.
An illustration of the resulting block structure is shown in Figure~\ref{fig:nonnested_schur_blocks}
We choose this partitioning of the Schur complement as it provides
better performance with the subsequent methods. The following theory
holds in both settings nevertheless.
The individual blocks $\Schur_{i, j}$ can be defined in terms of
$\outArcs{i} = (a_{k_{1}} = (i, j_{1}), \ldots, a_{k_{r_{i}}} = (i,
j_{r_{i}}))$ as follows:
\begin{enumerate}
\item
  For $i = j$ the matrix $\Schur_{i, i}$ consists of blocks
  \begin{equation*}
    \begin{cases}
      \outMat{k_{l}} B_{i}^{-1} (\outMat{k_{l}})^{T} + \incMat{k_{l}} B_{j_{l}}^{-1} (\incMat{k_{l}})^T + D_{k_l} & \text{ if } l = l', \\
      \outMat{k_{l}} B_{i}^{-1} (\outMat{k_{l'}})^{T} & \text{ otherwise}, \\
    \end{cases}
  \end{equation*}
  for $l, l' = 1, \ldots, r_{i}$.
\item
  For $i \neq j$ such that $(i, j) \in \innerArcs$, we let
  $\outArcs{j} = (a_{k'_{1}}, \ldots, a_{k'_{r_{j}}})$ be the outgoing arcs of $j$,
  noting that $\Schur_{i, j}$ consists of $k_{r_{i}} \times k'_{r_{j}}$ blocks.
  The block for $l = 1, \ldots, k_{r_{i}}$ and $l' = 1, \ldots, k'_{r_{j}}$ is given by
  $\incMat{k_{l}} B_{j}^{-1} (\outMat{k'_{l'}})^{T}$ if $a_{k_{l}} = (i, j)$ and
  a zero matrix otherwise.
\item
  For $i \neq j$ with $(j, i) \in \innerArcs$, it holds that $\Schur_{j, i} = \Schur^{T}_{i, j}$.
\item
  All other blocks are zero.
\end{enumerate}
Note that while $\Schur$ does not coincide with
$\Schur_{\leq \rootVert}$, the complexity required to compute $\Schur$
is on par with the complexity of computing all Schur
complements $\Schur_{\leq i}$ using
Algorithm~\ref{alg:arrowhead_schur} which was established in
Lemma~\ref{lem:direct_running_time}.
Specifically, for each $j \in \innerVertices$ the computation of the block $\Schur_{j, j}$
requires $\numOutCoupled{j}$ solutions of systems involving $B_{j}$
plus an additional $\numInCoupled{j}$ for the block $\Schur_{i, j}$
if $(i, j) \in \Arcs$. As a result, $\mathcal{O}(l_{j})$
solutions of systems involving $B_{j}$ are required in total.

We can define the preconditioner $\Precond$ based on $\Schur$ (rather
than $-\Schur$ as was the case up to now), yielding a block-upper
triangular preconditioned matrix:
\begin{equation*}
  \Precond \define
  \begin{pmatrix}
    \mathcal{B} & \zeroMat \\
    \mathcal{C} & \Schur
  \end{pmatrix}
  \quad
  \Longrightarrow
  \quad
  \Precond^{-1}
  \begin{pmatrix}
    \mathcal{B} & \mathcal{C}^{T} \\
    \mathcal{C} & -\mathcal{D}
  \end{pmatrix}
  =
  \begin{pmatrix}
    \identMat & \mathcal{B}^{-1} \mathcal{C}^{T} \\
    \zeroMat & - \identMat
  \end{pmatrix}.
\end{equation*}
It is apparent from inspection that the preconditioned matrix has the
two eigenvalues $\pm 1$ and its minimal polynomial is $(x - 1)(x + 1)$,
leading to two-step convergence of a preconditioned GMRES method. The
application of $\Precond^{-1}$ to a right-hand side $(h, f)$,
yielding a solution of $(x, y)$, is a three-step process:
\begin{enumerate}
\item
  Compute $x$ by solving $\mathcal{B} x = h$.
\item
  Compute the right-hand side $f_{x} \gets f - \mathcal{C}x$.
\item
  Compute $y$ by solving $\Schur y = f_{x}$.
\end{enumerate}
The first step is straightforward, involving a solution of
a system involving $B_i$ for each $i \in \Vertices$.
The solution of the system involving $\Schur$ is computationally more
challenging, since $\Schur$ has a size equal to the
total number of coupled variables, therefore being
substantially larger than each of the Schur complements $\Schur_{\leq i}$
from Section~\ref{sec:direct_method}.
Of course, since $\Schur$ is
positive definite by Assumption~\ref{ass:general_assumptions}, we can
use a Cholesky decomposition to compute $y$, for example.
We can, however, alternatively employ an approach that is more tailored to the structure
of~\eqref{eq:2by2_coupled_system}. To this end, we note that the positive
definiteness of $\Schur$ implies that all of its diagonal blocks $\Schur_{i, i}$
must be positive definite as well. Consequently, the diagonal approximation
$\Schur_{\diag} \define \blkdiag (\Schur_{1, 1}, \ldots, \Schur_{\numBlocks, \numBlocks})$
of $\Schur$ is also positive definite and therefore invertible.
We can therefore use $\Schur_{\diag}$ within the fixed-point iteration
\begin{equation}
  y^{(k + 1)} \gets y^{(k)} + \Schur_{\diag}^{-1} (f_{x} - \Schur y^{(k)}),
  \label{eq:block_jacobi}
\end{equation}
based on an initial solution $y^{(0)}$.
A sufficient condition for the convergence of the iteration
is the so-called \emph{smoothing property} established by the following bound on
the spectral radius $\rho$ of the iteration matrix:
\begin{thm}
  \label{thm:spectral_radius}
  It holds that $\rho(\identMat - \Schur_{\diag}^{-1} \Schur) < 1$.
\end{thm}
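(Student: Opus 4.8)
The plan is to exploit the fact that an arborescence is bipartite and to combine this with a Schur-complement argument. First I would partition the inner vertices by the parity of their depth, $\innerVertices = \innerVertices_{\even} \cup \innerVertices_{\odd}$, and observe from the block description of $\Schur$ given above that $\Schur_{i,j}$ vanishes unless $i=j$ or $i$ and $j$ are adjacent in $\Graph$. Since every arc $(i,j)\in\Arcs$ satisfies $\depth(j)=\depth(i)+1$, adjacent vertices always have depths of opposite parity, so the symmetric permutation grouping the even-depth inner vertices first brings $\Schur$ into the $2\times2$ block form $\left(\begin{smallmatrix}\Schur_{\even} & \mathcal{F}\\ \mathcal{F}^{T} & \Schur_{\odd}\end{smallmatrix}\right)$ in which $\Schur_{\even}$ and $\Schur_{\odd}$ are themselves block-diagonal (gathering the diagonal blocks $\Schur_{i,i}$ over the two vertex classes). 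Under the same permutation $\Schur_{\diag} = \blkdiag(\Schur_{\even},\Schur_{\odd})$. Both $\Schur_{\even}$ and $\Schur_{\odd}$ are principal submatrices of the positive definite matrix $\Schur$, hence themselves positive definite and in particular invertible.

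Next I would compute the iteration matrix in this ordering,
\[
  J \define \identMat - \Schur_{\diag}^{-1}\Schur = -\begin{pmatrix}\zeroMat & \Schur_{\even}^{-1}\mathcal{F}\\ \Schur_{\odd}^{-1}\mathcal{F}^{T} & \zeroMat\end{pmatrix},
\]
whose square is block-diagonal, $J^{2} = \blkdiag\!\left(\Schur_{\even}^{-1}\mathcal{F}\Schur_{\odd}^{-1}\mathcal{F}^{T},\ \Schur_{\odd}^{-1}\mathcal{F}^{T}\Schur_{\even}^{-1}\mathcal{F}\right)$. Setting $G \define \Schur_{\even}^{-1/2}\mathcal{F}\Schur_{\odd}^{-1/2}$ (well defined by positive definiteness), the first diagonal block of $J^{2}$ is similar (via $\Schur_{\even}^{1/2}$) to $GG^{T}$ and the second to $G^{T}G$; these are positive semidefinite with the same nonzero eigenvalues, so $\rho(J)^{2} = \rho(J^{2}) = \|G\|_{2}^{2}$. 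The claim therefore reduces to $\|G\|_{2}<1$, i.e.\ to $\identMat - GG^{T} \succ \zeroMat$, which after conjugating by $\Schur_{\even}^{1/2}$ is equivalent to $\Schur_{\even} - \mathcal{F}\Schur_{\odd}^{-1}\mathcal{F}^{T} \succ \zeroMat$.

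Finally, I would close the argument by observing that $\Schur_{\even} - \mathcal{F}\Schur_{\odd}^{-1}\mathcal{F}^{T}$ is precisely the Schur complement of the block $\Schur_{\odd}$ in $\Schur$, and that the Schur complement of a principal block of a positive definite matrix is positive definite; combined with $\rho(J)^2=\|G\|_2^2$ this yields $\rho(\identMat-\Schur_{\diag}^{-1}\Schur)<1$. The only genuine step is the first one: recognizing that the arborescence forces a block $2$-cyclic structure on $\Schur$ with respect to the vertex-based partition, which simultaneously makes $J^{2}$ block-diagonal and makes a Schur complement of $\Schur$ reappear. Everything afterwards is routine linear algebra, and it is exactly here that the positive definiteness of $\Schur$ in Assumption~\ref{ass:general_assumptions}, rather than mere invertibility, is used.
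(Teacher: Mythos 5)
Your proposal is correct, and it reaches the conclusion by a genuinely different route from the paper's, although both hinge on the same combinatorial observation: every arc joins vertices whose depths differ by one, so the vertex-based blocks give $\Schur$ a block two-cyclic structure with respect to the even/odd-depth partition of $\innerVertices$. The paper exploits this by first showing that all eigenvalues of $\identMat - \Schur_{\diag}^{-1}\Schur$ are real and then proving $\det(\mu\Schur_{\diag}-\Schur)\neq 0$ for all $\mu\notin(0,2)$; the only nontrivial case, $\mu=2$, is settled by the congruence $\mathcal{Z}\Schur\mathcal{Z}^{T}=2\Schur_{\diag}-\Schur$ with $\mathcal{Z}$ built from the signs $(-1)^{\depth(i)}$ --- which, in your even/odd ordering, is precisely the sign flip $\blkdiag(\identMat,-\identMat)$ turning $\left(\begin{smallmatrix}\Schur_{\even}&\mathcal{F}\\ \mathcal{F}^{T}&\Schur_{\odd}\end{smallmatrix}\right)$ into $\left(\begin{smallmatrix}\Schur_{\even}&-\mathcal{F}\\ -\mathcal{F}^{T}&\Schur_{\odd}\end{smallmatrix}\right)$. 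You instead compute the iteration matrix explicitly, square it, and identify its spectral radius as $\|\Schur_{\even}^{-1/2}\mathcal{F}\Schur_{\odd}^{-1/2}\|_{2}^{2}$, reducing the claim to positive definiteness of the Schur complement $\Schur_{\even}-\mathcal{F}\Schur_{\odd}^{-1}\mathcal{F}^{T}$ of $\Schur_{\odd}$ in $\Schur$. Every step checks out, and both arguments use positive definiteness of $\Schur$ from Assumption~\ref{ass:general_assumptions} in an essential way. What your version buys is a sharp quantitative statement, $\rho(\identMat-\Schur_{\diag}^{-1}\Schur)=\bigl\|\Schur_{\even}^{-1/2}\mathcal{F}\Schur_{\odd}^{-1/2}\bigr\|_{2}$, rather than mere strict inequality; what the paper's buys is brevity, since it needs no permutation, no matrix square roots of the diagonal blocks, and no singular-value argument. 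The only point worth adding to yours is the degenerate case $\innerVertices_{\odd}=\emptyset$ (inner subgraph of height zero), where $\mathcal{F}$ is empty, the iteration matrix vanishes, and the claim is trivial.
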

\begin{proof}
  Recall that $\Schur$ as well as the blocks $\Schur_{i, i}$ and
  $\Schur_{\diag}$ are positive definite by Assumption~\ref{ass:general_assumptions}.
  Therefore, $\Schur^{1/2}_{\diag}$ and
  $\Schur^{-1/2}_{\diag}$ exist and the matrix $\identMat - \Schur_{\diag}^{-1} \Schur$
  is similar to
  \begin{equation*}
    \Schur^{1/2}_{\diag} \left( \identMat - \Schur_{\diag}^{-1} \Schur \right) \Schur^{-1/2}_{\diag} = \identMat - \Schur^{-1/2}_{\diag} \Schur \Schur^{-1/2}_{\diag}.
  \end{equation*}
  Since this matrix is symmetric, all eigenvalues of $\identMat - \Schur_{\diag}^{-1} \Schur$ are real.
  To show that all of these real eigenvalues
  are contained in $(-1, 1)$ we have to show that
  $\det((1 - \lambda) \identMat - \Schur_{\diag}^{-1} \Schur) \neq 0$ for
  all $\lambda \notin (-1, 1)$, or, equivalently that
  $\det(\mu \Schur_{\diag} - \Schur) \neq 0$ for all
  $\mu \notin (0, 2)$.

  For $\mu \leq 0$ the case is clear, since
  we add a positive multiple of a negative definite matrix to another
  negative definite matrix, preserving negative definiteness and
  therefore non-singularity.

  To handle the case $\mu \geq 2$, we show that $\hat{\Schur} \define 2 \Schur_{\diag} - \Schur$
  is positive definite: This is sufficient to ensure non-singularity
  for all $\mu > 2$, since we could then simply add a positive
  multiple of $(\mu - 2)$ of the positive definite $\Schur_{\diag}$
  to the positive definite matrix $\hat{\Schur}$.
  To show that $\hat{\Schur}$ is positive definite, consider the matrix
  $\mathcal{Z} = \blkdiag(Z_{1}, \ldots, Z_{\numBlocks})$,
  where $Z_{i} \define (-1)^{\depth(i)} \identMat$ for each
  $i \in \innerVertices$. It holds that $\mathcal{Z}^{T}\mathcal{Z} = \identMat$,
  implying that $\mathcal{Z}$ is invertible and
  $\tilde{\Schur} \define \mathcal{Z} \Schur \mathcal{Z}^{T}$ is
  positive definite. Due to the cancellation of negative signs,
  the diagonal entries of $\tilde{\Schur}$ and $\Schur$ coincide.
  Furthermore, for each arc $a_k = (i, j) \in \innerArcs$,
  we have that $(-1)^{\depth(i)} \cdot (-1)^{\depth(j)} = -1$, ensuring that
  the off-diagonal blocks in $\tilde{\Schur}$ and $\Schur$ have opposing signs.
  Consequently, we have that $\tilde{\Schur} = 2 \Schur_{\diag} - \Schur = \hat{\Schur}$,
  proving that $\hat{\Schur}$ is positive definite.
\end{proof}
\begin{cor}
  The iterates $y^{(k)}$ produced by~\eqref{eq:block_jacobi} converge
  to $y = \Schur^{-1} f_x$ for any initial $y^{(0)}$.
\end{cor}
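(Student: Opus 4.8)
The plan is to recognize this corollary as the standard convergence criterion for a stationary (one-step linear) iteration, for which Theorem~\ref{thm:spectral_radius} has already supplied the decisive ingredient. First I would note that the target vector $y^{\star} \define \Schur^{-1} f_{x}$ is well defined, since $\Schur$ is positive definite (hence invertible) by Assumption~\ref{ass:general_assumptions}, and that it is a fixed point of the iteration map in~\eqref{eq:block_jacobi}: substituting $y^{(k)} = y^{\star}$ gives $f_{x} - \Schur y^{\star} = 0$, so $y^{(k+1)} = y^{\star}$. The iteration itself is well posed because $\Schur_{\diag}$ is positive definite, as argued just before~\eqref{eq:block_jacobi}.

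Next I would pass to the error. Setting $e^{(k)} \define y^{(k)} - y^{\star}$ and subtracting the fixed-point identity $y^{\star} = y^{\star} + \Schur_{\diag}^{-1}(f_{x} - \Schur y^{\star})$ from~\eqref{eq:block_jacobi}, the right-hand side contribution $\Schur_{\diag}^{-1} f_{x}$ cancels and one obtains the homogeneous recursion
\begin{equation*}
  e^{(k+1)} = \left( \identMat - \Schur_{\diag}^{-1} \Schur \right) e^{(k)},
  \qquad \text{hence} \qquad
  e^{(k)} = \left( \identMat - \Schur_{\diag}^{-1} \Schur \right)^{k} e^{(0)}
\end{equation*}
for all $k \in \Nat$.

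Finally I would invoke the elementary fact that a matrix power $A^{k}$ converges to the zero matrix as $k \to \infty$ if and only if $\rho(A) < 1$. Theorem~\ref{thm:spectral_radius} asserts precisely $\rho(\identMat - \Schur_{\diag}^{-1} \Schur) < 1$, so $(\identMat - \Schur_{\diag}^{-1} \Schur)^{k} \to 0$, and therefore $e^{(k)} \to 0$ for every choice of $e^{(0)}$, i.e.\ for every initial iterate $y^{(0)}$. This gives $y^{(k)} \to y^{\star} = \Schur^{-1} f_{x}$, as claimed. There is no real obstacle here: the content is entirely carried by Theorem~\ref{thm:spectral_radius}, and the remaining work is the routine error-recursion bookkeeping together with the standard spectral-radius convergence lemma; the only points worth stating explicitly are that $\Schur$ being invertible guarantees both existence and uniqueness of the limit and that $\Schur_{\diag}$ being invertible makes each step of~\eqref{eq:block_jacobi} meaningful.
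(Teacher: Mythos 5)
Your proposal is correct and follows essentially the same route as the paper, which simply cites the standard convergence theorem for stationary iterations (spectral radius of the iteration matrix less than one) as a direct consequence of Theorem~\ref{thm:spectral_radius}. You have merely written out in full the error recursion and spectral-radius argument that the cited result encapsulates.
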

\begin{proof}
  This is an elementary result in iterative linear algebra~(see e.g., \cite[Theorem 4.1]{iterative_methods}), directly following
  from the smoothing property established in
  Theorem~\ref{thm:spectral_radius}. \qedhere
\end{proof}
\subsection{Two-level Method}

In the following we expand on the iterative method introduced
above by applying multigrid (MG) ideas in order
to solve the Schur complement, obtaining a multi-level method.
The origins of algebraic multigrid (AMG) methods date back to the seminal
works~\cite{algebraic_multigrid_for_sparse,algebraic_multigrid},
with an introduction given in~\cite{amg_intro}.
We apply the exact two-level method~\cite[Sec. 5]{amg_intro} to the
problem of solving systems involving $\Schur$. To this end, we
let $\MLDimension_{f} \define \sum_{i \in \innerVertices} \numOutCoupled{i}$ be the dimension
of $\Schur$ and consider a \emph{coarse} approximation
of the solution space $\Real^{\MLDimension_f}$ with a dimension of $\MLDimension_{c} < \MLDimension_{f}$.
The \emph{prolongation} matrix $\MLProlong \in \Real^{\MLDimension_{f} \times \MLDimension_{c}}$ is used
to map solutions from the coarse to the original (fine) space whereas
its transpose \emph{restricts} solutions to the coarse space. The
\emph{smoothing} matrix $\MLSmoother \in \Real^{\MLDimension_{f} \times \MLDimension_{f}}$ is chosen,
as its name suggests,
to satisfy the smoothing property with respect to $\Schur$.
Furthermore, we let
$\Schur_{c} \define \MLProlong^{T} \Schur \MLProlong$ denote the
restriction of $\Schur$ to $\Real^{\MLDimension_c}$, which we assume to be invertible.
The multi-level (ML) approximation $\MLOperator \in \Real^{\MLDimension_{f} \times \MLDimension_{f}}$ of the inverse of
$\Schur$ is defined by its action on a right-hand side $g \in \Real^{\MLDimension_{f}}$,
which is computed based on the following steps:
\begin{enumerate}
\item
  Restrict $g$ to $\Real^{\MLDimension_{f}}$ via $g_{c} \gets \MLProlong^{T} g$.
\item
  Compute coarse-space correction $w_{c} \gets \Schur_{c}^{-1} g_{c}$.
\item
  Prolongate correction $w \gets \MLProlong w_{c}$.
\item
  Compute post-smoothing step $\MLOperator(g) \define w + \MLSmoother(g - \Schur w)$.
\end{enumerate}
To ensure convergence, it is once again necessary to bound the
spectral radius of the iteration matrix $\identMat - \MLOperator \Schur$:%
\begin{lem}[{\cite[Lemma 5.2]{amg_intro}}]
  \label{lem:two_level_iteration_matrix}
  The iteration matrix $(\identMat - \MLOperator \Schur)$ is given by
  \begin{equation*}
    (\identMat - \MLSmoother \Schur)(\identMat - \Pi_{c}),
  \end{equation*}
  where $\Pi_{c}$ is the $(\cdot, \cdot)_{\Schur}$-orthogonal
  projection on $\Real^{\MLDimension_{f}}$, given by $\Pi_{c} \define \MLProlong \Schur_{c}^{-1} \MLProlong^{T} \Schur$.
\end{lem}
Since the matrix $(\identMat - \Pi_{c})$ defines a $(\cdot, \cdot)_{\Schur}$-orthogonal projection, we have the estimate $\|\identMat - \Pi_{c}\|_{\Schur} \leq 1$, where $\| \cdot \|_{\Schur}$ denotes the norm induced by the corresponding scalar product. This, along with the equality $\| T \|_S = \|\Schur^{1/2} T \Schur^{-1/2}\|_2$ for any matrix $T$, allows us to bound the spectral radius of the ML iteration from above by
\begin{align*}
\rho\left(\left(\identMat - \MLSmoother \Schur\right)\left(\identMat - \Pi_{c}\right)\right) &\le \left\| \left(\identMat - \MLSmoother \Schur\right)\left(\identMat - \Pi_{c}\right) \right\|_{\Schur} \le \left\| \identMat - \MLSmoother \Schur \right\|_S \\
&= \| \identMat - \Schur^{1/2} \MLSmoother \Schur^{-1/2} \|_2 = \rho \left( \identMat - \Schur^{1/2} \MLSmoother \Schur^{-1/2} \right) = \rho\left(\identMat - \MLSmoother \Schur\right).
\end{align*}
Hence, the ML iteration itself satisfies the smoothing property with respect to $\Schur$ as long
as $\MLSmoother$ does. Since
the smoothing property holds for $\Schur_{\diag}$, setting $\MLSmoother = \Schur^{-1}_{\diag}$
is an obvious choice. Readers familiar with multigrid methods will note that this iteration
consists of a single post-smoothing step without any pre-smoothing applied. This
is for ease of notation, and in practice may be adapted to improve convergence speed.

It remains to choose a suitable coarse prolongation matrix $\MLProlong$.
An important requirement for the choice is that the
coarse-space correction $w_{c}$ can be computed efficiently. Recall that
a reason for why we cannot easily solve the matrix $\Schur$ in the first
place is the presence of the off-diagonal blocks $\Schur_{i, j}$. If
those blocks were not present, we could simply decompose the diagonal
blocks. What is more, to apply the smoother $\Schur_{\diag}^{-1}$, we may want to
decompose the diagonal blocks anyway.
We therefore propose to consider as a coarse space the restriction of
$\Schur$ to a \emph{conflict-free} set of vertices.  Two vertices
$i, j \in \innerVertices$ are said to be \emph{in conflict} if
$(i, j) \in \innerArcs$ or $(j, i) \in \innerArcs$
and \emph{conflict-free} otherwise.
A set $\innerVertices_{s}$ of vertices is \emph{conflict-free} iff all
of its vertices are conflict-free.
Notable examples of conflict-free sets
are given by the (inclusion-wise maximal) sets of even/odd vertices:
\begin{equation*}
    \innerVertices_{\even} \define \{ i \in \innerVertices \mid \depth(i) \: \mathrm{even} \} \quad \text{and} \quad
    \innerVertices_{\odd} \define \{ i \in \innerVertices \mid \depth(i) \: \mathrm{odd} \}.
\end{equation*}
To define a prolongation and restriction, consider two (ordered) subsets
$\innerVertices_{c} \subseteq \innerVertices_{f}$ of
vertices where $\innerVertices_{f} \define (i_{1}, \ldots, i_{p})$ and
$\innerVertices_{c} \define (j_{1}, \ldots, j_{q})$.
The \emph{subset operator} $\MLSubset_{\innerVertices_{c}, \innerVertices_{f}}$ mapping from $\Real^{\numOutCoupled{i_{1}}} \times \cdots \times \Real^{\numOutCoupled{i_{p}}}$
to $\Real^{\numOutCoupled{{j_{1}}}} \times \cdots \times \Real^{\numOutCoupled{{j_{q}}}}$
is defined by its action on a vector
$(y_{i_{1}}, \ldots, y_{i_{p}})$ as
\begin{equation*}
  \MLSubset_{\innerVertices_{c}, \innerVertices_{f}}((y_{i_{1}}, \ldots, y_{i_{p}})) \define
  (y_{j_{1}}, \ldots, y_{j_{q}}).
\end{equation*}
For a given conflict-free set $\innerVertices_{s} = \{i_{{1}}, \ldots, i_{{l}}\}$, we can therefore obtain
a two-level method by defining the prolongation matrix to
be $\MLProlong \define \MLSubset^{T}_{\innerVertices_{s}, \innerVertices}$.
As a consequence, the restriction $\MLProlong^{T} = \MLSubset_{\innerVertices_{s}, \innerVertices}$ removes from a vector $y$
the entries not belonging to $\innerVertices_s$, and
\begin{equation*}
  \Schur_{c} = \blkdiag(\Schur_{{1}, {1}}, \ldots, \Schur_{{l}, {l}})
\end{equation*}
is block-diagonal and invertible, enabling the efficient computation of the coarse-space correction.

\subsection{Multi-level Methods}
\label{sec:multi_level}

A different choice for the prolongation matrix $P$ may be used
to derive multi-level methods aiding in solving for the Schur
complement $\Schur$. A \emph{multi-level method} with $\MLNumLevels$
levels consists of $\MLNumLevels$ increasingly fine approximations
of dimensions
$\MLDimension_{c} = \MLDimension_{1} \leq \MLDimension_{2} \leq \ldots \leq \MLDimension_{f}$
of the original solution
space together with smoothing and prolongation matrices
$\MLSmoother_{j} \in \Real^{\MLDimension_{j} \times \MLDimension_{j}}$,
$\MLProlong_{j} \in \Real^{\MLDimension_{j + 1} \times \MLDimension_{j}}$ yielding restrictions
$\Schur_{j} \define \MLProlong_{j}^{T} \cdots \MLProlong_{\MLNumLevels - 1}^{T} \Schur \MLProlong_{\MLNumLevels - 1}
\cdots \MLProlong_{j}$ for $j = 1, \ldots, \MLNumLevels - 1$, where we set $\Schur_{1} \enifed \Schur_{c}$.
The \emph{V-cycle} consists of computing corrections on increasingly
coarse subspaces up to $\Real^{\MLDimension_{f}}$ (where $\Schur_{c}$ is solved exactly), followed
by a post-smoothing step.
The corresponding ML matrix $\MLOperator_{j} \in \Real^{\MLDimension_{j} \times \MLDimension_{j}}$ at level $j$ can
be described recursively by its action on a right-hand side $g_{j} \in \Real^{\MLDimension_{j}}$:
\begin{enumerate}
\item
  If $j = 1$, return $\Schur_{c}^{-1} g_{j}$.
\item
  Restrict $g_{j}$ to $\Real^{\MLDimension_{j - 1}}$ via $g_{j - 1} \gets \MLProlong_{j}^{T} g_{j}$.
\item
  Compute coarse-space correction $w_{j - 1} \gets \MLOperator_{j - 1}(g_{j - 1})$.
\item
  Prolongate correction $w_{j} \gets \MLProlong_{j - 1} w_{j - 1}$.
\item
  Compute post-smoothing step $\MLOperator_{j}(g_{j}) \define w_{j} + \MLSmoother_{j}(g_{j} - \Schur_{j} w_{j})$.
\end{enumerate}
To define the matrices, we consider a nested family of sets of vertices
$\innerVertices_{c} = \innerVertices_{1} \subset \cdots \subset \innerVertices_{\MLNumLevels} = \innerVertices$.
For each level $j = 1, \ldots, \MLNumLevels$ we pick as $\Real^{\MLDimension_{j}}$ the space
associated with the variables corresponding to $\innerVertices_{j}$.
Correspondingly, the prolongation matrix for
$j \in \{1, \ldots, \MLNumLevels - 1\}$ is given by
$\MLProlong_{j} \define \MLSubset^{T}_{\innerVertices_{j}, \innerVertices_{j + 1}}$ Furthermore, we let
the finest smoothing matrix be the original diagonal part of $\Schur$,
\ie $\MLSmoother_{\MLNumLevels} \define \Schur_{\diag}^{-1}$, and set
$\MLSmoother_{j} \define \MLProlong_{j}^{T} \cdots \MLProlong_{\MLNumLevels - 1}^{T} \MLSmoother_{\MLNumLevels} \MLProlong_{\MLNumLevels - 1} \cdots
\MLProlong_{j}$, which is to say the non-singular matrix consisting of the blocks of
$\Schur_{\diag}^{-1}$ corresponding to $\innerVertices_{j}$. Finally, to ensure
that we can solve $\Schur_{c}$ efficiently, we ask that $\innerVertices_{c}$ be
conflict-free. To define nested families of sets of arcs, we once
again make use of the topology of the graph by setting
\begin{equation*}
  \begin{aligned}
    \innerVertices_{\leq k} &\define \{ i \in \innerVertices \mid \depth(i) \leq k \} \text{, and} \\
    \innerVertices_{\geq k} &\define \{ i \in \innerVertices \mid \depth(i) \geq k \},
  \end{aligned}
\end{equation*}
for $k = 1, \ldots, \height(\innerGraph)$. Clearly it holds that
$\innerVertices_{\leq j} \subset \innerVertices_{\leq j + 1}$, and in particular
$\innerVertices_{\leq \height(\innerGraph)} = \innerVertices$. Furthermore, the set
$\innerVertices_{\leq 1} = \{\rootVert\}$ is conflict-free. Thus, we
can construct a multi-level method with $\MLNumLevels = \height(\innerGraph)$ levels based
on these arc sets by setting $\innerVertices_{j} \define \innerVertices_{\leq j}$, where
the coarsest solution space consists of the arcs incident to the root
$\rootVert$. Symmetrically, we can consider the sets
$\innerVertices_{\geq j + 1} \subset \innerVertices_{\geq j}$, where
$\innerVertices_{\geq 1} = \innerVertices$ and $\innerVertices_{\geq \height(\innerGraph)}$ consist of
the (conflict-free) set of leaves of $\innerGraph$. Thus, we
can set $\innerVertices_{j} \define \innerVertices_{\geq \height(\Graph) - j + 1}$ to
obtain another multi-level method. We call these methods
the \emph{Bottom-Up} and \emph{Top-Down} multi-level method, respectively.

As for the convergence of these methods, we can
apply Lemma~\ref{lem:two_level_iteration_matrix} repeatedly going
from the coarsest space (where the restricted system is solved exactly)
to the finest space. To this end, the smoothing matrix
$\MLSmoother_{j}$ has to satisfy the smoothing property with
respect to $\Schur_{j}$ for all $j = 2, \ldots, \MLNumLevels$.
It is, however, easy to see that
these smoothing properties are always satisfied, since they correspond
to the original smoothing property established in
Theorem~\ref{thm:spectral_radius}, applied either to a single subtree in the
Bottom-Up or several times to different subtrees in the Top-Down method.

Finally, several alternative iteration schemes, known as W- and
F-cycles~\cite{multigrid_tut} have been introduced and can be easily
adapted into corresponding multi-level methods. The notable difference
between V- and W-cycle consists of the fact that the latter iteration
performs two coarse-correction steps rather than a single one in each
level. Consequently, the running time of the W-cycles grows
exponentially in $\MLNumLevels = \height(\Graph)$, whereas the running
times of V- and F-cycles remain polynomial in $\MLNumLevels$.
\subsection{Super-Node Smoothing}
\label{sec:super_node}
The matrix $\Schur_{\diag}$ introduced above has the smoothing
property and is therefore a suitable choice as a smoother, while
having the disadvantage of not capturing the adjacency in $\innerGraph$.
An alternative is to incorporate the adjacency of a subset of the
vertices in $\innerVertices$. To capture some of this information,
we can merge a subset of the vertices in $\Graph$ into
a single super-vertex. Specifically, we let $i \in \innerVertices$ and
$\outArcs{i} = (a_{k_{1}} = (i, j_{1}), \ldots, a_{k_{r_{i}}} = (i, j_{r_{i}}))$
and treat the submatrix of $\Schur$ composed of the blocks for
$i, j_1, \ldots, j_{r_{i}}$ as a single larger block.
In terms of the underlying graph structure, the creation of
the super-vertex is equivalent to the contraction of the
arcs $a_{k_{1}}, \ldots, a_{k_{r_{i}}}$ in any order, resulting
in a smaller directed tree corresponding to the change in the blocks
of $\Schur$.
Consequently, Theorem~\ref{thm:spectral_radius}
still applies and we obtain another smoother to be used in our multi-level
method. However, to apply the inverse of this
\emph{super-node} smoother to a right-hand side, we have to solve
the system corresponding to the newly created super-vertex, possibly requiring
an additional factorization.
The approach can be extended by creating multiple super vertices based
on different vertices. Furthermore, given a subset
$\innerVertices_j \subseteq \innerVertices$ associated with a specific level $j$
in a multi-level method, we can pick these super vertices depending on
$\innerVertices_j$ (provided that $i, j_{1}, \ldots, j_{r_{i}}$ are
contained in $\innerVertices_j$) in order to obtain a level-specific
smoother.  Since the intuitive importance of arcs decreases with
decreasing height, we propose to turn all vertices in
$\innerVertices_{j}$ with maximum height into super vertices,
yielding the smoother $\Schur_{\super}(\innerVertices_{j})$.

\section{Computational Experiments}
\label{sec:experiments}

In this section, we showcase the performance of the presented preconditioners with a set of numerical experiments. We examine the convergence of preconditioned Krylov solvers and their dependence on the problem parameters and preconditioner settings. Table~\ref{tab:preconditioners_overview} provides an overview of the preconditioners that are analyzed subsequently, along with their abbreviated identifiers and definitions. The tests are implemented in \texttt{Python}~3.9.18 \cite{python3}, complemented with the libraries \texttt{SciPy}~1.10.0 \cite{scipy} for various numerical linear algebra routines, and \texttt{DOLFINx} for finite element method functionalities \cite{dolfinx}. All experiments were conducted on an Intel\textsuperscript{\textregistered} Core\textsuperscript{TM} i7-10710U processor.

The following experiments serve as a proof of concept regarding the convergence behavior and the scaling of the computational cost of the iterative methods. We showcase our ability to compete with widely used methods by considering a specific example in Section~\ref{sec:multiple_shooting}. It is noted that most of the following problems can be solved efficiently with direct linear solvers, including sparse LU factorizations such as SuperLU~\cite{superlu}. However, our methods demonstrate better scaling and, therefore, can enable an improvement over direct methods for large problem sizes, so a specific parallel implementation to solve problems of huge scale is a key avenue of future work.

\begin{table}
  \centering
  \begin{tabular*}{\textwidth}{@{\extracolsep\fill}ll@{}}
    \toprule
    \textbf{Preconditioner} & \textbf{Definition} \\
    \midrule
    Nested Block-diagonal (Nested B-Diag) & Eq.~\eqref{eq:block_diagonal_preconditioner} in Section~\ref{sec:nested_block_diagonal} \\
    Nested Hook & Eq.~\eqref{eq:hook_preconditioner} in Section~\ref{sec:nested_hook} \\
    Nested Recursive & Eq.~\eqref{eq:recursive_preconditioner} in Section~\ref{sec:nested_recursive} \\
    Nested Exact & Section~\ref{sec:nested_exact} \\
    Non-nested Block-diagonal (Non-nested B-Diag) & Eq.~\eqref{eq:block_jacobi} in Section~\ref{sec:direct_preconds} \\
    Multi-level (ML) & Section~\ref{sec:multi_level} \\
    \bottomrule
  \end{tabular*}
  \caption{Overview of the tested preconditioners and their definitions.}
  \label{tab:preconditioners_overview}
\end{table}

\subsection{Scenario Tree NMPC}
\label{sec:scenario_mpc}
The first test problem stems from optimal control of systems under uncertainty. A well-established method for robustly tackling (deterministic) optimal control problems is {\it nonlinear model predictive control} (NMPC, see \cite{nmpc}). Uncertainties of systems can be modeled by scenario trees, which, when integrated with NMPC, leads to so-called {\it scenario tree NMPC} (see e.g., \cite{scenario_mpc}). How scenario tree NMPC can be applied in practice is presented in \cite{scenario_mpc_software}. The control problem is reduced to an optimization problem which leads to linear systems fitting into the framework discussed here. We consider the example provided in \cite{scenario_mpc_software}, which models masses coupled through spring packets containing redundant arrays of springs, with each spring having a certain fault probability.

\begin{figure}
\centering
\setlength{\figureheight}{0.6\textwidth}
\setlength{\figurewidth}{\textwidth}
\includegraphics[width=\textwidth]{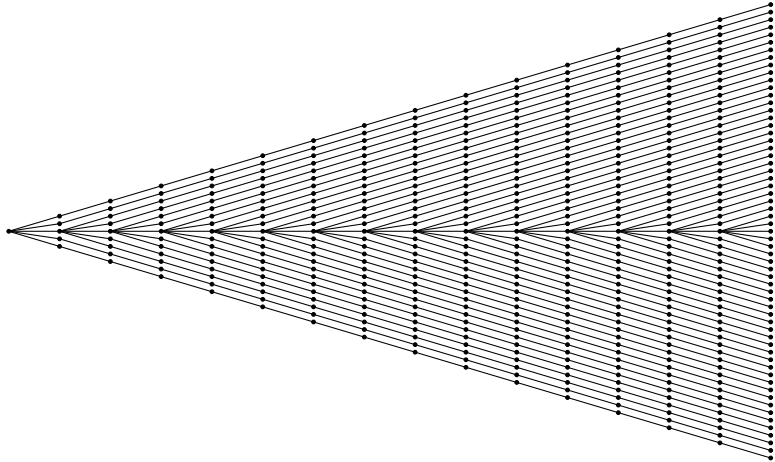}
\caption{Tree resulting from the application of the scenario tree NMPC approach \cite{scenario_mpc_software} to the problem of connected spring packets. Each level corresponds to a decision point of the scenarios. Each path from the root to a leaf represents a single scenario. The preconditioners are based on the topology of this scenario tree.}
\label{fig:scenario_mpc_pruned_tree}
\end{figure}

\begin{figure}
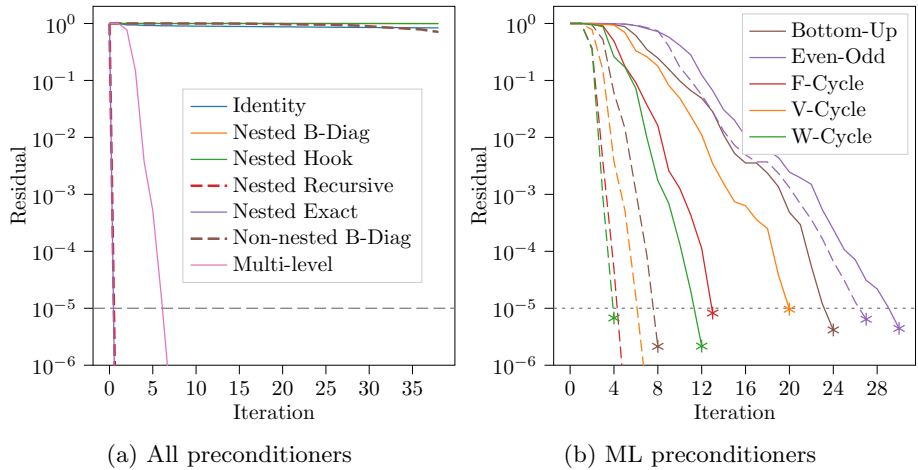

  \centering
  \begin{subfigure}{0.49\textwidth}
    \setlength{\figureheight}{\textwidth}
    \setlength{\figurewidth}{\textwidth}
    \includegraphics[width=\textwidth]{scenario_mpc_all}
    \caption{All preconditioners}
    \label{fig:benchmark_scenario_mpc:all}
  \end{subfigure}
  \begin{subfigure}{0.49\textwidth}
    \setlength{\figureheight}{\textwidth}
    \setlength{\figurewidth}{\textwidth}
    \includegraphics[width=\textwidth]{scenario_mpc_ml}
    \caption{ML preconditioners}
    \label{fig:benchmark_scenario_mpc:ml}
  \end{subfigure}
\caption{Convergence for the scenario tree NMPC problem with different preconditioners for a tree depth of $15$. The plots show the relative residual of the solution at each GMRES iteration: all preconditioners with one representative ML preconditioner (left), various ML preconditioners (right). The right plot shows the convergence for the block-diagonal (solid lines) and the super-node smoother (dashed lines). The nested recursive, nested exact, and ML preconditioners show fast convergence, while the others do not provide a significant improvement over the unpreconditioned method.}
\label{fig:benchmark_scenario_mpc}
\end{figure}

We fix the time step for the scenario tree generation to \num{0.1}. The underlying system contains \num{4} spring packets each of which contains \num{6} springs. The spring fault probability is set to \num{0.003}. The scenario tree is chosen to model \num{15} time steps, corresponding to a tree of depth \num{15}. The resulting scenario tree is depicted in Figure~\ref{fig:scenario_mpc_pruned_tree}. The overall system has \num{12432} degrees of freedom and \num{76} nodes. The resulting linear system is then solved with GMRES equipped with the preconditioners introduced above. The iterative solver is run without restarts and a maximum of \num{100} iterations. To allow a better comparison of various preconditioners, we use right preconditioning. That is, to solve $\mathcal{A} x = b$, GMRES is applied to the system
\begin{align*}
\widetilde{\mathcal{A}} \widetilde{x} = b\, \quad \text{with} \quad \widetilde{\mathcal{A}} = \mathcal{A} \mathcal{P}^{-T} \text{ and } x = \mathcal{P}^{-T} \widetilde{x}.
\end{align*}
It is noted that the matrices $\mathcal{P}^{-1}\mathcal{A}$ and $\mathcal{A}\mathcal{P}^{-T}$ are spectrally equivalent and possess the same minimal polynomial. Thus, the above theoretical results also apply to right preconditioning with the transpose.
Figure~\ref{fig:benchmark_scenario_mpc} shows the number of GMRES iterations for different preconditioners and the 2-norm of the associated relative residuals
\begin{equation*}
r_{r, k} \define \frac{r_k}{\left\| r_0 \right\|},
\end{equation*}
where $r_k$ denotes the absolute residual at the $k$th iteration and $r_0 = b$ if we start with a zero initial guess. As found in experiments, different settings for the ML preconditioner lead to similar convergence results. Thus, Figure~\ref{fig:benchmark_scenario_mpc:all} only shows one representative for the ML preconditioner (the V-cycle with super-node smoother introduced in Section~\ref{sec:super_node}), while Figure~\ref{fig:benchmark_scenario_mpc:ml} provides a detailed insight into the behavior of different settings of the ML method. 

As expected, the nested exact and recursive preconditioners lead to convergence within one GMRES iteration. This aligns with the results of Lemma~\ref{lem:exact_recursive_solution} and Corollary~\ref{cor:exact_convergence}. The nested block-diagonal preconditioner does not result in a significant reduction in the iteration numbers. The nested hook preconditioner slows down the convergence and was found to be effective only for shallow trees.
The discussion of the non-nested block-diagonal preconditioner refers to \eqref{eq:block_jacobi}, and the experiment suggests that the preconditioner does not resemble the system well enough to serve as an effective preconditioner. Finally, we have the ML preconditioner, giving a significant reduction in the number of iterations. Figure~\ref{fig:benchmark_scenario_mpc:ml} depicts variations of the ML approach (indicated by color) paired with either the block-diagonal smoother (solid lines) or the super-node smoother (dashed lines). The super-node smoother performs better than the block-diagonal smoother in general.

When increasing the tree depth to \num{20}, most preconditioners failed due to excessive runtime or the exhaustion of the maximum number of iterations. On the one hand, this failure can be explained by the increased runtime for each application of some of the preconditioners due to their exponentially increasing complexity with respect to the tree depth. While the nested block-diagonal, the nested hook, and the non-nested block-diagonal preconditioners do not have such runtime scaling, these do not provide a sufficient performance in order to converge within the prescribed maximum number of iterations. The ML preconditioner using a V-cycle, however, still showed good convergence behavior, yielding convergence within \num{9} iterations in the case of the super-node smoother and within \num{26} iterations in the case of the block-diagonal smoother.

\subsection{Multiple Shooting for Optimal Control}
\label{sec:multiple_shooting}

We consider the solution of optimal control problems with ordinary differential equations, more specifically, problems of the form
\begin{alignat*}{3}
\min_{y, u} &\quad \frac{1}{2}\int_0^T \left\| y(t) - y_d(t) \right\|^2\mathrm{d}t + \frac{\beta}{2}\int_0^T \left\| u(t) - \bar{u} \right\|^2 \, \mathrm{d}t, \span \span\\
\quad \text{s.t.} &\quad \dot{y}(t) &&= f(y(t), u(t)), \\
&\quad y(0) &&= y_0.
\end{alignat*}
When considering large time horizons, such problems can lead to numerically challenging problems since the optimality conditions lead to equations both forward and backward in time. Parallel-in-time methods (see e.g., \cite{50years_pint}) tackle this problem by allowing parallelization along the time axis. Among various types of such methods, we focus on a {\it multiple shooting} method \cite{multiple_shooting, multiple_shooting_bock_plitt}. The time interval $[0, T]$ is separated into smaller intervals $[t_k, t_{k + 1}]$ and, where necessary, appropriate matching constraints are enforced at the interfaces. This results in a series of subsystems, each of which can be considered independently. The subintervals can be arranged into sparsely coupled trees (see Figure~\ref{fig:multiple_shooting}). This tree structure is not just restricted to binary trees but can be generalized to arbitrary numbers of children. 

\begin{figure}
  \centering
  \includegraphics[width=\textwidth]{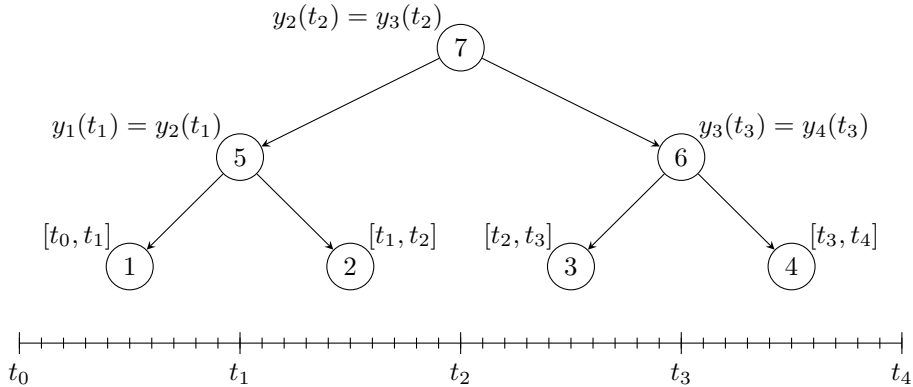}
  \caption{Rearranging matching constraints in multiple shooting.}
  \label{fig:multiple_shooting}
\end{figure}  

\begin{figure}
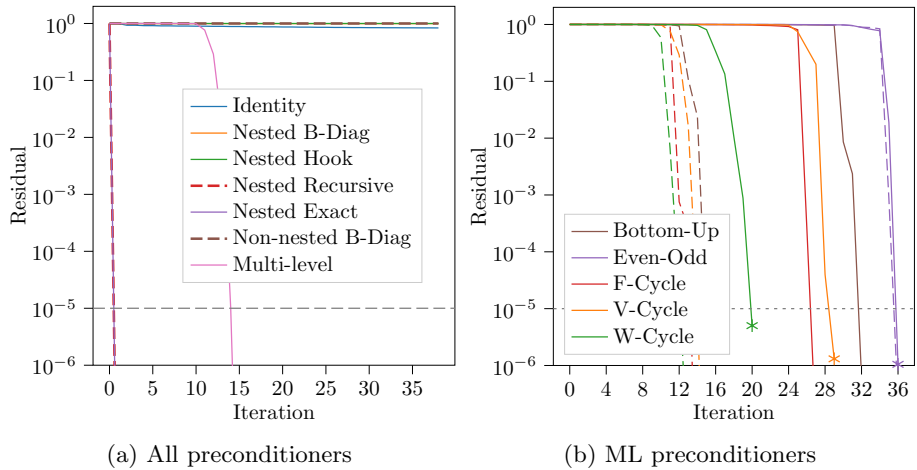

  \centering
  \begin{subfigure}{0.49\textwidth}
    \setlength{\figureheight}{\textwidth}
    \setlength{\figurewidth}{\textwidth}
    \includegraphics[width=\textwidth]{multiple_shooting_binary_all}
    \caption{All preconditioners}
    \label{fig:benchmark_multiple_shooting_binary:all}
  \end{subfigure}
  \begin{subfigure}{0.49\textwidth}
    \setlength{\figureheight}{\textwidth}
    \setlength{\figurewidth}{\textwidth}
    \includegraphics[width=\textwidth]{multiple_shooting_binary_ml}
    \caption{ML preconditioners}
    \label{fig:benchmark_multiple_shooting_binary:ml}
  \end{subfigure}
\caption{Convergence for the multiple shooting problem with a binary tree. The plots follow the same structure as in Figure~\ref{fig:benchmark_scenario_mpc}. The nested recursive, nested exact, and ML preconditioners admit fast convergence, while the others do not provide a significant improvement over the unpreconditioned method.}
\label{fig:benchmark_multiple_shooting_binary}
\end{figure}

We look at a modified Lotka--Volterra problem
\begin{align*}
f(y, u) = \begin{pmatrix}
y_1 - y_1 y_2 - c_1 y_1 u \\
-y_2 + y_1 y_2 - c_2 y_2 u
\end{pmatrix},
\end{align*}
with $y_d = (1, 1)^T$, $\bar{u} = 0.5$, $T = 12$, $c_1=0.4$, $c_2=0.2$, $\beta=0.1$, and the initial state $y_0 = (0.5, 0.7)^T$. First, the time horizon is separated into \num{8} subintervals and arranged in a binary tree with depth \num{4}. Figure~\ref{fig:benchmark_multiple_shooting_binary} shows the performance of the preconditioners for this setting. We can, in fact, observe similar behavior of the preconditioners as in the scenario tree NMPC problem except for the ML preconditioner. The first few iterations of the ML preconditioners do not lead to a significant improvement in the residual, but it eventually achieves rapid convergence.

\begin{figure}
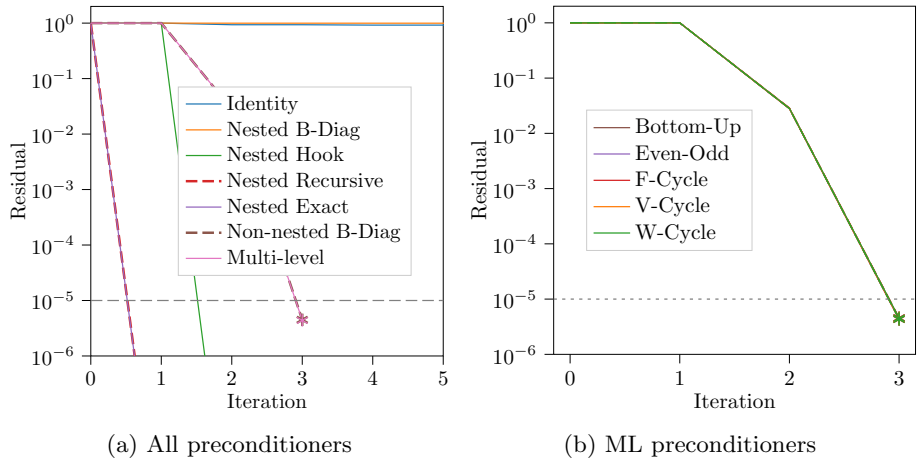

  \centering
  \begin{subfigure}{0.49\textwidth}
    \setlength{\figureheight}{\textwidth}
    \setlength{\figurewidth}{\textwidth}
    \includegraphics[width=\textwidth]{multiple_shooting_shallow_all}
    \caption{All preconditioners}
    \label{fig:benchmark_multiple_shooting_shallow:all}
  \end{subfigure}
  \begin{subfigure}{0.49\textwidth}
    \setlength{\figureheight}{0.97\textwidth}
    \setlength{\figurewidth}{\textwidth}
    \includegraphics[width=\textwidth]{multiple_shooting_shallow_ml}
    \caption{ML preconditioners}
    \label{fig:benchmark_multiple_shooting_shallow:ml}
  \end{subfigure}
\caption{Convergence for the multiple shooting problem with a shallow tree. The plots follow the same structure as in Figure~\ref{fig:benchmark_scenario_mpc}. All preconditioners admit fast convergence except for the nested block-diagonal preconditioner.}
\label{fig:benchmark_multiple_shooting_shallow}
\end{figure}

In a second setting, we consider a shallow tree with the root node having \num{64} children. The results are depicted in Figure~\ref{fig:benchmark_multiple_shooting_shallow}. In contrast to the preceding experiments, we can observe that the hook preconditioner convergences within two iterations, which aligns with Corollary~\ref{cor:hook_convergence}. All ML preconditioners are found to converge within \num{3} GMRES iterations. The non-nested block-diagonal preconditioner shows the same behavior as the V-cycle. The remaining convergence results overlap with what we have seen in the previous experiments.

\paragraph{Comparison with condensing.}
Finally, we compare the iterative solvers we have derived to an established problem-specific method, called {\it condensing}~(cf.~\cite{multiple_shooting_bock_plitt} and \cite[p.~560 ff.]{mpc}). The condensing approach recursively constructs an explicit expression for the state as a function of the control, making use of the block-sparse structure of the underlying matrices. This way, the state variable is eliminated in the optimal control problem, and the problem is cast as an equivalent unconstrained quadratic program. While this is usually carried out on the shooting grid, it is more reasonable here to perform the condensing on the (much finer) time discretization grid due to the discretization of the control, which aligns with the time grid and not with the multiple shooting grid. The method is known to scale quadratically in the number of timesteps for constructing the explicit form of the state. Even though the resulting quadratic program is dense and the runtime of its solution would scale cubically, it is described in \cite{mpc} how a Cholesky decomposition of the resulting linear system can be constructed with linear runtime scaling, reducing the method's overall runtime behavior to quadratic scaling. In the following analysis, the efficient computation of the Cholesky decomposition is not implemented. Since the runtime of condensing is dominated by the construction of the necessary dense matrices anyway, it is sufficient to measure the runtime of this step only to get an insight into how the methods compare.

Figure~\ref{fig:benchmark_multiple_shooting_condensing} compares the runtimes of condensing and the multiple shooting approach coupled with our ML-preconditioned iterative solver. The ML preconditioner is configured to use a V-cycle with the block-diagonal smoother. The domain is separated into sub-intervals for multiple shooting, each containing \num{40} equidistant forward Euler timesteps, and is arranged into a shallow tree. The $x$-axis shows the number of timesteps, and the $y$-axis denotes the runtime in seconds for assembling the necessary matrices and solving the system (except for the condensing method).
The dashed lines represent linear and quadratic scaling. The condensing approach has a quadratic scaling, consistent with the theoretical results. In contrast, the preconditioned iterative method exhibits a linear scaling and outperforms condensing for larger problem sizes. Additionally, the ML approach has the potential for parallelization, which would result in an even greater speedup. The same asymptotic scaling would be expected if the condensing was carried out on the coarse shooting grid.


\begin{figure}
\centering
\setlength{\figureheight}{0.49\textwidth}
\setlength{\figurewidth}{0.49\textwidth}
\includegraphics[width=0.49\textwidth]{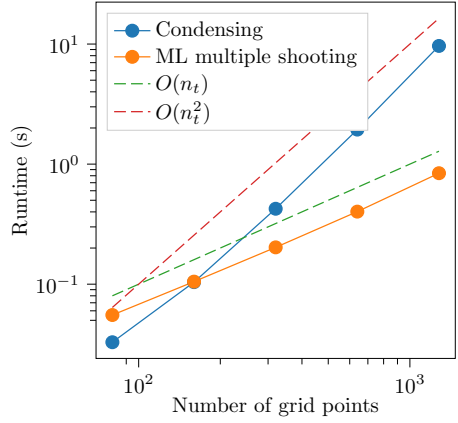}
\caption{Runtime comparison of condensing and the ML preconditioner for solving the multiple shooting problem for various numbers of timesteps~$n_t$. The linear runtime behavior of the ML-preconditioned solver provides better scaling than the quadratic scaling of the condensing approach.}
\label{fig:benchmark_multiple_shooting_condensing}
\end{figure}

\subsection{Domain Decomposition for PDEs}

\begin{figure}
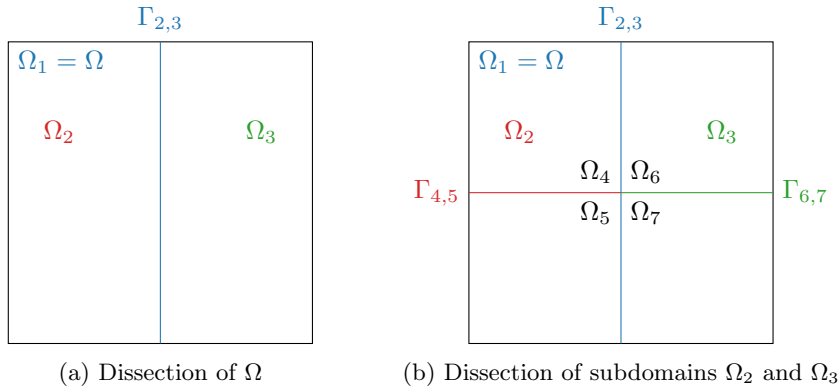

  \centering
    \begin{subfigure}{0.49\textwidth} 
        \centering
        \includegraphics{domain_decomposition_1}
        \caption{Dissection of $\Omega$}
    \end{subfigure}
    \begin{subfigure}{0.49\textwidth} 
        \centering
        \includegraphics{domain_decomposition_2}
        \caption{Dissection of subdomains $\Omega_2$ and $\Omega_3$}
    \end{subfigure}
    \caption{Construction of subdomains by recursive dissection of the domain $\Omega$. The dissecting lines of two domains $\Omega_i$ and $\Omega_j$ are labeled by $\Gamma_{i,j}$, along which consensus constraints are enforced.}
    \label{fig:dom_decomp_pde}
\end{figure}

The tree-sparse approach to multiple shooting in Section \ref{sec:multiple_shooting} can be transferred to domain decomposition methods for elliptic PDEs. Given a PDE on a domain $\Omega$, we can dissect $\Omega$ into an arbitrary number of subdomains. This allows us to solve the PDE on each subdomain separately while enforcing consensus constraints along the interfaces, also known as non-overlapping domain decomposition \cite[p.~74~ff.]{domain_decomposition}. This procedure can be repeated for all the subdomains recursively, resulting in a hierarchical decomposition of the domain, which can then be arranged into a tree structure (see Figure~\ref{fig:dom_decomp_pde}). The novel aspect of this approach is that the interfaces are interconnected across a tree, resulting in a more structured Schur complement. While the primary challenge of non-overlapping domain decomposition lies in approximating the Schur complement, we assume that complete information is available and demonstrate that the tree-sparse ordering coupled with our methods still provides good performance. This encourages future work to seek more efficient approximations of the structured and simpler Schur complement and to make the approach practicable. For the experiment, we consider the mixed formulation of the Poisson problem (see e.g., \cite{poisson_dom_decomp}). The solution of the Poisson problem within this formulation can be viewed as an optimization problem, hence allowing us to fit these into the above framework. A more detailed discussion of the theoretical background can be found in Appendix~\ref{app:dom_decomp}.

\begin{figure}
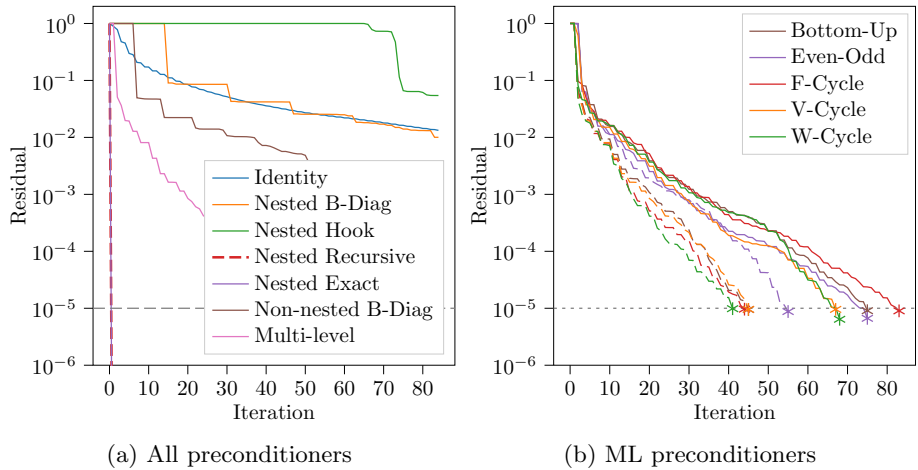

  \centering
  \begin{subfigure}{0.49\textwidth}
    \setlength{\figureheight}{\textwidth}
    \setlength{\figurewidth}{\textwidth}
    \includegraphics[width=\textwidth]{dom_decomp_pde_all}
    \caption{All preconditioners}
    \label{fig:benchmark_domain_decomp_pde:all}
  \end{subfigure}
  \begin{subfigure}{0.49\textwidth}
    \setlength{\figureheight}{\textwidth}
    \setlength{\figurewidth}{\textwidth}
    \includegraphics[width=\textwidth]{dom_decomp_pde_ml}
    \caption{ML preconditioners}
    \label{fig:benchmark_domain_decomp_pde:ml}
  \end{subfigure}
\caption{Convergence for the domain decomposition problem arranged in a binary tree. The plots follow the same structure as in Figure~\ref{fig:benchmark_scenario_mpc}. The nested recursive, nested exact, and ML preconditioners show fast convergence. The non-nested block-diagonal preconditioner provides slightly improved convergence compared to the unpreconditioned method.}
\label{fig:benchmark_domain_decomp_pde}
\end{figure}

We discretize the solution with a non-uniform triangulation of the domain $\Omega = [0, 1]^2$ and a maximum cell size of $h = \num{0.0125}$. The discretization conforms with the predefined interfaces. In our first setting, the domain is decomposed into eight subdomains, which are then arranged in a binary tree of depth four. The convergence of the preconditioners is shown in Figure~\ref{fig:benchmark_domain_decomp_pde} following the same structure as before. The nested exact and recursive preconditioners converge within one iteration. The hook preconditioner is again found to be ineffective for deep trees. The nested block-diagonal preconditioner does not result in improved convergence when compared to the identity preconditioner. As opposed to the previous experiments, the non-nested block-diagonal preconditioner leads to a reduction in iteration numbers, but is still outperformed by the ML preconditioners.

\begin{figure}
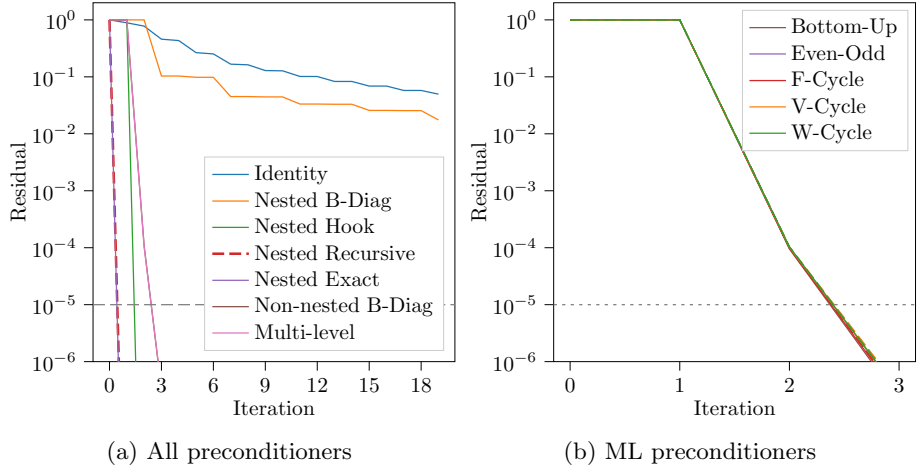

  \centering
  \begin{subfigure}{0.49\textwidth}
    \setlength{\figureheight}{\textwidth}
    \setlength{\figurewidth}{\textwidth}
    \includegraphics[width=\textwidth]{dom_decomp_pde_local_level_all}
    \caption{All preconditioners}
    \label{fig:benchmark_domain_decomp_pde_local_levels:all}
  \end{subfigure}
  \begin{subfigure}{0.49\textwidth}
    \setlength{\figureheight}{\textwidth}
    \setlength{\figurewidth}{\textwidth}
    \includegraphics[width=\textwidth]{dom_decomp_pde_local_level_ml}
    \caption{ML preconditioners}
    \label{fig:benchmark_domain_decomp_pde_local_levels:ml}
  \end{subfigure}
\caption{Convergence for the domain decomposition problem arranged in a tree with each inner node having \num{8} children. The plots follow the same structure as in Figure~\ref{fig:benchmark_scenario_mpc}. All preconditioners admit fast convergence except for the nested block-diagonal preconditioner.}
\label{fig:benchmark_domain_decomp_pde_local_levels}
\end{figure}

Similarly to Section~\ref{sec:multiple_shooting}, the tree can be arranged as a shallow tree with \num{8} leaves directly connected to the root. In Figure~\ref{fig:benchmark_domain_decomp_pde_local_levels}, we see the same behavior as in the case of the shallow tree in the multiple shooting test problem.

\begin{figure}
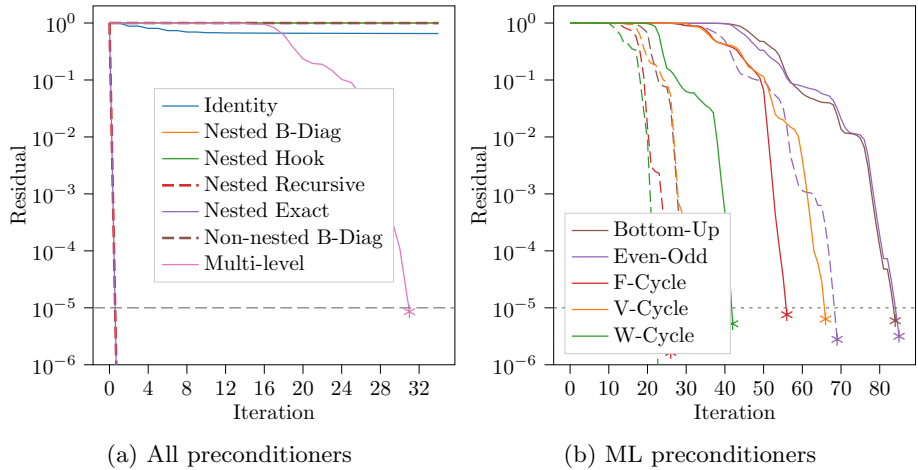

  \centering
  \begin{subfigure}{0.49\textwidth}
    \setlength{\figureheight}{\textwidth}
    \setlength{\figurewidth}{\textwidth}
    \includegraphics[width=\textwidth]{dom_decomp_pcop_all}
    \caption{All preconditioners}
    \label{fig:benchmark_domain_decomp_pcop:all}
  \end{subfigure}
  \begin{subfigure}{0.49\textwidth}
    \setlength{\figureheight}{\textwidth}
    \setlength{\figurewidth}{\textwidth}
    \includegraphics[width=\textwidth]{dom_decomp_pcop_ml}
    \caption{ML preconditioners}
    \label{fig:benchmark_domain_decomp_pcop:ml}
  \end{subfigure}
\caption{Convergence for the domain decomposition problem for a PDE-constrained optimization problem arranged in a binary tree. The plots follow the same structure as in Figure~\ref{fig:benchmark_scenario_mpc}. The nested recursive, nested exact, and ML preconditioners show fast convergence.}
\label{fig:benchmark_domain_decomp_pcop}
\end{figure}

This approach can be extended to PDE-constrained optimization problems. For an overview of such problems, the reader is referred to \cite{pcop}. We consider a Poisson control problem with a distributed control, i.e.,
\begin{alignat*}{5}
\min_{y, u} &&\quad \frac{1}{2} \int_{\Omega} \left| y(x) - y_d(x) \right|^2 \mathrm{d}x + \frac{\beta}{2} \int_{\Omega} u(x)^2 \mathrm{d}x, \span \span \span \\
\text{s.t.} &&\quad -\Delta y(x) &= u(x) \quad &&\text{in } \Omega, \\
&&\quad y(x) &= 0 \quad &&\text{on } \partial \Omega,
\end{alignat*}
where $u$ denotes a distributed control and $\beta > 0$ is a regularization parameter. The domain decomposition is carried out the same way as for the Poisson problem.

Again, we use a triangulation of the domain $\Omega = [0, 1]^2$ with a maximum cell size of \num{0.0125}, and the regularization parameter is set to $\beta=\num{1e-4}$. The desired state $y_d$ is set to
\begin{align*}
y_d(x) = \sin\left(\pi x_1\right) \sin\left(\pi x_2\right).
\end{align*}
If we arrange the domain into a binary tree of depth four, we obtain the convergence behavior depicted in Figure~\ref{fig:benchmark_domain_decomp_pcop}. The convergence behavior aligns with what has been observed in the previous examples with deep trees. For the ML preconditioners, the residual first remains on a plateau, but eventually achieves rapid convergence.  



\subsection{Computational Cost}
\begin{table}[t]
  \centering
  \begin{tabular*}{\textwidth}{@{\extracolsep\fill}lllS[table-format=4]S[table-format=5]S[table-format=6]@{}}
    \toprule
    \multicolumn{3}{c}{\textbf{Preconditioner}} & \multicolumn{3}{c}{\textbf{Tree Depth}} \\
    \cmidrule{4-6}
    & & & 5 & 10 & 15 \\
    \midrule
    \multicolumn{3}{l}{\textbf{Nested Exact}} & 1024 & 23198 & 721916 \\
    \multicolumn{3}{l}{\textbf{Nested Recursive}} & 1024 & 23198 & 721916 \\
    \midrule
    \multirow{10}*{~\textbf{ML}}&\multirow{2}*{V-Cycle} & {Block-diagonal} & 924 & 2028 & 2420 \\
    &&{Super-node} & 508 & 1059 & 1508 \\
    & \multirow{2}*{W-Cycle} &{Block-diagonal} & 768 & 1926 & 2192 \\
    &&{Super-node} & 456 & 957 & 1432 \\
    & \multirow{2}*{F-Cycle} &{Block-diagonal} & 690 & 1722 & 1964 \\
    &&{Super-node} & 456 & 906 & 1356 \\
    & \multirow{2}*{Bottom-Up} &{Block-diagonal} & 1002 & 2436 & 2800 \\
    &&{Super-node} & 508 & 1161 & 1584 \\
    & \multirow{2}*{Even-Odd} &{Block-diagonal} & 1184 & 3099 & 3256 \\
    &&{Super-node} & 1028 & 2997 & 3028 \\
    \bottomrule
  \end{tabular*}
  \caption{Number of solved subsystems involving $B_i$ when solving the scenario tree NMPC problem for different tree depths and preconditioners.}
  \label{tab:runtimes}
\end{table}
Since the computational complexity varies among the preconditioners,
the performance of a preconditioner is not determined by its iteration
numbers alone. Hence, we analyze the running time of the preconditioners in
this section. It is important to note that in these experiments no
potential for parallelizability has been exploited as this is subject to future work.  
Additionally, the focus of the implementation is on the qualitative performance of the
preconditioners rather than optimizing for computational
efficiency. In order to provide insights into the running time behavior,
the number of solved subsystems involving $B_i$ (including the setup of the
preconditioner) is considered as a measure of running time, as the overall
computational burden is dominated by this. We consider
the scenario tree NMPC problem from Section~\ref{sec:scenario_mpc} for
the tree depths \num{5}, \num{10}, and
\num{15}. Table~\ref{tab:runtimes} shows the results for
preconditioners that converged in under \num{100} iterations. The
nested exact and nested recursive preconditioner have the same number
of solved subsystems. Even though these preconditioners lead to
convergence in one iteration, the computational complexity grows
exponentially with the tree depth, resulting in \num{721916} solved
subsystems. On the other hand, the ML preconditioners are
more robust with respect to the tree depth, only requiring between
\num{1356} and \num{3256} subsystems to be solved for the deepest
tree. Thus, the ML preconditioners are more suitable for
large-scale problems, while for shallow trees the difference between
the preconditioners becomes less significant.

\section{Conclusion}

We proposed and examined several algorithmic approaches to solve
saddle-point systems with a tree-based block structure. Apart from the
direct method, these approaches are based on preconditioned iterative
linear algebra. Several of the problem-specific preconditioners have
notable theoretical properties warranting their utilization in
large-scale problem instances.
This holds in particular for the ML methods obtained by
applying MG methods to tree-coupled systems. Much of the theory of
MG methods carries over to ML methods and the corresponding
algorithms work very well in our numerical experiments, with the more
accurate super-node smoothing combined with an F-cycle
iteration type being particularly efficient.

Valuable future work would include the study of the effects of
parallelization, the analysis of coupled systems with an even more
general graph-based structure including cyclic dependencies, and the
derivation of methods to automatically detect this exploitable
structure within given linear systems.

\section*{Acknowledgements}

CH was supported by the BMBF (Germany) grant 05M22VHA.
BH was supported by the MAC--MIGS Centre for Doctoral Training under the EPSRC (UK) grant EP/S023291/1.
JWP was supported by the EPSRC grant EP/S027785/1.

\setcounter{biburlnumpenalty}{3000}
\setcounter{biburlucpenalty}{6000}
\setcounter{biburllcpenalty}{9000}

\printbibliography

\appendix
\section{Domain Decomposition in the Mixed Formulation}
\label{app:dom_decomp}

In this section, we give a more detailed description of how domain decomposition can be applied to elliptic PDEs in the presented framework. We consider the Poisson problem on a given Lipschitz domain $\Omega$. The PDE is given by
\begin{align}
\begin{aligned}
    -\Delta u(x) &= f(x) &&\text{in } x \in \Omega, \\
    u(x) &= u_0(x) &&\text{on } x \in \Gamma_D, \\
    \frac{\partial u}{\partial n}(x) &= g(x) &&\text{on } x \in \Gamma_N,
\end{aligned}
    \label{eq:Poisson}
\end{align}
where $\Omega \subset \Real^d$ with $d \in \Nat$, $\partial \Omega = \Gamma_D \cup \Gamma_N$, $\Gamma_D \cap \Gamma_N = \emptyset$, $f \in L^2(\Omega)$, $u_0 \in H^{1/2}(\Gamma_D)$, and $g \in H^{-1/2}(\Gamma_N)$. Our aim is to solve the PDE on separate domains in order to break down the overall problem into smaller pieces. For that purpose, we decompose $\Omega$ into non-overlapping closed subsets $\Omega_i \subset \Omega$ for $i \in \{1, \dots, n\}$ for some $n \in \Nat$, known as non-overlapping domain decomposition (see~\cite{domain_decomposition}). We then have that $\Omega = \bigcup_{i=1}^n \Omega_i$ and $\Omega_i \cap \Omega_j$ is a set of measure zero for $i \neq j$. We denote the interfaces between the subdomains by $\Gamma_{i,j} \define \partial \Omega_i \cap \partial \Omega_j$ for $i, j \in \{1, \dots, n\}$ with $i \neq j$. One can then solve the PDE on each subdomain separately, while enforcing continuity of $u$ and of the flux on the interfaces, i.e.,
\begin{align*}
\begin{aligned}
u_i(x) &= u_j(x) &&\text{on } x \in \Gamma_{i,j}, \\
\nabla u_i(x) \cdot n &= -\nabla u_j(x) \cdot n &&\text{on } x \in \Gamma_{i,j},
\end{aligned}
\end{align*}
where $u_i$ denotes the solution on the subdomain $\Omega_i$. If the continuity at the interfaces is fulfilled while all $u_i$ solve the PDE~\eqref{eq:Poisson} on each subdomain, the overall solution can be achieved by joining the partial solutions $u_i$. While this visualizes the basic idea of non-overlapping domain decomposition, we choose to use the mixed formulation of the Poisson problem instead of \eqref{eq:Poisson}, since this formulation allows us to impose the continuity conditions through matching of degrees of freedom in magnitude (given an appropriate choice of approximation function spaces). One further advantage of using the mixed formulation is that it allows lower regularity of the data and solution. The mixed formulation is given by
\begin{equation}
  \begin{aligned}
    \sigma - \nabla u &= 0 &&\text{in } \Omega, \\
    \nabla \cdot \sigma &= -f &&\text{in } \Omega, \\
    \sigma \cdot n &= g &&\text{on } \Gamma_N, \\
    u &= u_0 &&\text{on } \Gamma_D.
  \end{aligned}
  \label{eq:mixed_formulation}
\end{equation}
Under appropriate assumptions, it can be shown that \eqref{eq:mixed_formulation} is equivalent to \eqref{eq:Poisson}. We use finite element methods to solve the PDE, which requires us to consider the weak formulation of the PDE:
\begin{align*}
    \int_\Omega \left(\tau \cdot \sigma + \left(\nabla \cdot \tau\right) u + v \left( \nabla \cdot \sigma \right) \right)\mathrm{d}x = -\int_\Omega f v \mathrm{d}x + \int_{\Gamma_D} \left(\tau \cdot n\right) u_0 \mathrm{d}s \\\text{ for all } (\tau, v) \in \Sigma \times U,
\end{align*}
where $(\sigma, u) \in \Sigma \times U$, $\Sigma = \{\sigma \in H(\operatorname{div}, \Omega) \mid \sigma \cdot n = g \text{ on } \Gamma_N \}$ and 
$U = L^2(\Omega)$.
Now, the continuity of the flux at the interface $\Gamma_{i,j}$ translates to 
\begin{align*}
    \sigma_i \cdot n = -\sigma_j \cdot n \quad\text{on } \Gamma_{i,j}.
\end{align*}
In order to give this meaning, we have to consider a weak formulation. One might be tempted to choose
\begin{align*}
    \int_{\Gamma_{i,j}} \sigma_i \cdot n \mathrm{d}s = -\int_{\Gamma_{i,j}} \sigma_j \cdot n \mathrm{d}s \quad \text{for all } v \in L^2(\Omega).
\end{align*}
However, one has to be aware that $v$ is not well-defined on $\Gamma_{i,j}$ and the trace of $\sigma$ is an operator $\gamma_{\Sigma, i,j} \in H^{-1/2}(\Gamma_{i,j})$. In \cite{poisson_dom_decomp}, it is discussed how this problem can be circumvented. They introduce an additional test function for the interfaces. First, the following spaces are introduced:
\begin{align*}
M &\define \prod_i L^2(\Gamma_i), \\
\Sigma &\define \{q \in \left(L^2(\Omega)\right)^d \mid \left.q\right|_{\Omega_i} \in H(\operatorname{div}, \Omega_i), [q \cdot n] \in M\}, \\
U &\define L^2(\Omega).
\end{align*}
Let $\Gamma^I$ denote the set of all interfaces. Let $u, v \in U$, $\sigma, \tau \in \Sigma$, and $\phi, \psi \in M$. The mixed formulation is then given by
\begin{equation}
\begin{aligned}
    \int_\Omega \left( \sigma \cdot \tau + v \nabla \cdot \sigma + u \nabla \cdot 
    \tau \right) \mathrm{d} x 
    + \int_{\Gamma^I} [\sigma \cdot n] \psi \mathrm{d} s - \int_{\Gamma^I} [\tau \cdot n] \phi \mathrm{d} s \\
    = -\int_\Omega f v \mathrm{d}x \quad \text{for all } v \in U, \tau \in \Sigma, \psi \in M.
\end{aligned}
\label{eq:mixed_formulation_lifted}
\end{equation}
Boundary terms have been omitted for the sake of simplicity. Another way to look at this is to enforce the continuity of the flux in a weak sense, but with trial and test functions that have higher regularity. The higher regularity of $\Sigma$ at the interfaces increases the regularity of the trace from $H^{-1/2}(\Gamma_{i,j})$ to $L^2(\Gamma_{i,j})$. This formulation enforces both the continuity of $u_i$ as well as the continuity of the flux at the interfaces, and yields matrices fitting the framework presented in this paper.

In \cite[Sec. 5]{poisson_dom_decomp}, conditions for conforming subspaces are stated under which convergence of solutions of \eqref{eq:mixed_formulation_lifted} to solutions to \eqref{eq:mixed_formulation} is guaranteed. We make use of the more specific case discussed in \cite[Eq. (5.16)]{poisson_dom_decomp}. It states that the space spanned by the traces of the discretization $\Sigma_h$ of $\Sigma$ at the interfaces has to be the same space as the discretization $M_h$ of $M$. The conforming subspaces
\begin{align*}
    M_h &\define \prod_i P_0(\Gamma_{i, h}) \subset M, \\
    \Sigma_h &\define RT_1(\Omega_h) \subset \Sigma, \\
    U_h &\define P_0(\Omega_h),
\end{align*}
fulfill this condition, where $RT_1(\Omega_h)$ denotes the Raviart--Thomas finite element function space of first order. The traces of $RT_1(\Omega_h)$ are constant at each facet, which spans the space $M_h$. This also shows that the traces of $\Sigma_h$ are in $L^2$. Moreover, we have that
\begin{align*}
    \operatorname{div} \Sigma_h \subseteq U_h.
\end{align*}

If the decomposition is carried out recursively one can arrange the continuity constraints in a tree form, as described in the numerical experiments.

\end{document}